\theoremstyle{definition}
\newtheorem{definition}{Definition}[section]} 
\newtheorem{theorem}[definition]{Theorem}
\newenvironment{theorem*}[1]{{\bf Theorem #1} \begin{itshape}}{\end{itshape}}
\newtheorem{lemma}[definition]{Lemma}
\newtheorem{corollary}[definition]{Corollary}
\newenvironment{corollary*}[1]{{\bf Corollary #1} \begin{itshape}}{\end{itshape}}
\newtheorem{proposition}[definition]{Proposition}
\newenvironment{proposition*}[1]{{\bf Proposition #1} \begin{itshape}}{\end{itshape}}
\theoremstyle{remark}
\newtheorem{remark}[definition]{Remark}
\numberwithin{equation}{section}
\renewcommand{\Bbb}[1]{\mathbb{#1}}
\newcommand{\N}{{\Bbb N}}         
\newcommand{\R}{{\Bbb R}}        
\newcommand{\cB}{{\mathcal B}}
\newcommand{\cH}{{\mathcal H}}
\newcommand{\cK}{{\mathcal K}}
\newcommand{\bt}{\mathbf{t}}
\DeclareMathOperator{\dimh}{\dim_H}
\title{\sc Rectangular Shrinking Targets on Self-Similar Carpets}
\author{Demi Allen \\ (Exeter)\\ email: \href{mailto:d.d.allen@exeter.ac.uk}{d.d.allen@exter.ac.uk} \and Thomas Jordan \\ (Bristol)\\ email: \href{mailto:thomas.jordan@bristol.ac.uk}{thomas.jordan@bristol.ac.uk} \and Benjamin Ward \\ (York)\\email:\href{mailto:benjamin.ward@york.ac.uk}{benjamin.ward@york.ac.uk}}
\date{\today}
\begin{document}
\frenchspacing
\maketitle
\begin{abstract}
Since the introduction of the shrinking target problem by Hill and Velani in 1995 there has been a surge of interest in the area. In this paper we consider the case where the target is a rectangle, rather than a ball, and the underlying space is a self-similar carpet. We calculate the exact Hausdorff dimension of the resulting shrinking target set. Interestingly the Hausdorff dimension depends on the centre of the target, a condition uncommon in most other shrinking target type problems. This extends a theorem of Wang and Wu [Theorem 12.1, Math. Ann. 2021].  

\end{abstract}
%


\maketitle

\section{Introduction}

Let $(X,d)$ be a metric space equipped with a Borel probability measure $\mu$ and let \mbox{$T: X \to X$} be a measure preserving transformation; that is, for any Borel set $A \subset X$, we have \linebreak \mbox{$\mu(T^{-1}A) = \mu(A)$}. 
Suppose we are given a sequence $(B_n)_{n \in \N}$ of measurable sets in $X$, and suppose that these sets are ``shrinking'' in some way. Then, the classical \emph{shrinking target problem} is concerned with studying the set of points $x \in X$ which ``hit'' infinitely many of the targets, $B_n$, when acted upon by $T$; i.e., one is often interested in the set
\[\cB = \left\{x \in X: T^n x \in B_n \quad \text{for i.m. } n \in \N\right\},\]
where `i.m.' denotes `infinitely many'. 
Typically one is interested in studying the measure theoretic properties of the set $\cB$. However, initiated by the work of Hill and Velani \cite{HillVelani95,HillVelani99} there has been a great amount of interest in studying the Hausdorff dimension of shrinking target sets. Usually the underlying set $X$ is a fractal set, in particular when $X$ is a self-similar, self-conformal, or self-affine set and the sets $B_n$ are balls where the radius shrinks exponentially. See for example \cite[Chapter 9]{Falconer}, \cite[Chapters 2 and 4]{BishopPeres} or \cite[\S7-9]{FraserAssouad} for the relevant definitions and background on self-similar, self-conformal and self-affine sets.

In \cite{HillVelani99}, Hill and Velani studied the Hausdorff dimension of shrinking targets sets in self-conformal sets.  This was followed up more recently with work relating to the Hausdorff measures of the same sets by Baker \cite{Baker2019}, and the first author together with B\'{a}r\'{a}ny \cite{AllenBarany}. Baker also studied shrinking targets in self-similar sets with overlaps in \cite{BakerMemoirs}. The problem of shrinking targets on self-affine sets has also been studied, see \cite{BaranyRams} for the Bedford-McMullen carpets case, \cite{KoivusaloRamirezSelfAffine} for a ``typical'' self-affine set, \cite{BaranyTroscheitRandomisedTranslations} for generic (as introduced by Falconer \cite{Falconer1988}) self-affine sets, and \cite{KoivusaloLiaoRamsPathDependent} for self-affine sets where the rate of shrinking is path-dependent. Very recently, Baker and Koivusalo have studied shrinking targets in overlapping self-affine iterated function systems where the targets considered may be more exotic \cite{BakerKoivusalo}. In \cite{JordanKoivusaloPU}, the second author and Koivusalo consider shrinking targets in a class of self-affine sets. While the shrinking targets considered in \cite{JordanKoivusaloPU} are balls, the structure of the self-affine sets makes the work similar in nature to the present work where we consider rectangular shrinking targets for self-similar systems.

Much of the work above focuses on the shrinking target problem for sequences of balls. Our aim in this paper is to study the Hausdorff dimension, denoted $\dimh$, of shrinking targets in self-similar carpets where our targets are rectangles rather than balls. Results relating to rectangular shrinking targets on a restrictive class of self-affine sets have been obtained by Wang and Wu \cite[Section 12]{WangWu}, but they require the underlying self-affine set to be the Cartesian product of one-dimensional self-similar sets, see \cite{WangWu} for more details. We do not have this condition in our setup, but we do require the self-similar carpet under consideration to have a nice ``grid structure''. 

As a motivation for our setup, consider the following example. Let $V$ denote the Vicsek set or Cross Fractal. That is, {$V$ is} the set of points in $[0,1]^{2}$ whose two dimensional base $3$ expansion uses exclusively the digits $\{(0,0),(2,0),(0,2),(1,1),(2,2)\}$. Consider the transformation $\tilde{T}:[0,1]^{2}\to[0,1]^{2}$ defined by
\begin{equation*}
    \tilde{T}(x_{1},x_{2})=\left(3x_{1} \mod 1\,,\, 3x_{2} \mod 1\, \right).
\end{equation*}
Our results applied to this setting give:
\begin{theorem} \label{example result}
Let $\xi\geq \lambda>0$, {let} $y=(y_{1},y_{2})\in V$ be fixed, and consider the set 
\begin{equation*}
    V_{\lambda,\xi}(y):=\left\{x\in V : \tilde{T}^{n}(x) \in [y_{1}-3^{-n\lambda},y_{1}+3^{-n\lambda}]\times[y_{2}-3^{-n\xi},y_{2}+3^{-n\xi}] \quad \text{ for i.m. } \, \, n\in \N \right\}\, .
\end{equation*}
Suppose that the digit frequencies of $y_{2}$ exist for each digit base $3$. Then
\begin{equation*}
    \dimh {V}_{\lambda,\xi}(y) = \min \left\{ \frac{\dimh V}{1+\lambda}, \frac{1}{1+\xi}\left( \dimh V+ (\xi-\lambda)\dimh V_{y}\right)\right\},
\end{equation*}
where $V_{y}$ denotes the horizontal slice through $V$ at the point $y$.
\end{theorem}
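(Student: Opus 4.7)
The plan is to work with the decomposition
\begin{equation*}
V_{\lambda,\xi}(y) = \limsup_{n\to\infty} \bigcup_{\mathbf{i} \in \{1,\ldots,5\}^n} f_{\mathbf{i}}\left(V \cap R_n(y)\right),
\end{equation*}
where $f_1,\ldots,f_5$ are the five similarities of ratio $1/3$ generating $V$ and $R_n(y) := [y_1 - 3^{-n\lambda}, y_1 + 3^{-n\lambda}] \times [y_2 - 3^{-n\xi}, y_2 + 3^{-n\xi}]$. Each set $f_{\mathbf{i}}(V \cap R_n(y))$ is a $3^{-n}$-scaled similar copy of $V \cap R_n(y)$, and is thus contained in a rectangle of horizontal side $\sim 3^{-n(1+\lambda)}$ and vertical side $\sim 3^{-n(1+\xi)}$. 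Write $A := \dimh V / (1+\lambda)$ and $B := (\dimh V + (\xi-\lambda)\dimh V_y)/(1+\xi)$ for the two candidate values in the min.

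For the upper bound I would produce two natural covers of this limsup and keep whichever is more efficient at each stage. The first cover uses a single ball of diameter $\sqrt{2}\cdot 3^{-n(1+\lambda)}$ per word $\mathbf{i}$; summing over $\mathbf{i}\in\{1,\ldots,5\}^n$ and $n\geq N$ gives $s$-Hausdorff-sum convergence whenever $s>A$. The second cover uses the level-$\lceil n(1+\xi)\rceil$ grid squares of $V$. The key combinatorial input is that, for each $\mathbf{i}$, the number of level-$\lceil n\xi\rceil$ cylinders of $V$ meeting $R_n(y)$ factors over positions $k\in(n\lambda,n\xi]$: at each such position the number of horizontal Vicsek digits compatible with the prescribed $k$-th base-$3$ digit of $y_2$ is $2$ if that digit is $0$ or $2$, and $1$ if it is $1$. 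The digit-frequency assumption on $y_2$ converts this product to $3^{n(\xi-\lambda)\dimh V_y+o(n)}$. Summing over $\mathbf{i}$ and $n$ yields convergence for $s>B$, so $\dimh V_{\lambda,\xi}(y)\leq\min\{A,B\}$.

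The lower bound is where the real work lies. I would build a Cantor subset of $V_{\lambda,\xi}(y)$ along a sparse sequence $n_1<n_2<\cdots$ by imposing, at each stage $k$, that the coding of $x$ in the block of positions $(n_k,\lceil n_k(1+\xi)\rceil]$ matches the coding of $y$ in the prescribed horizontal and vertical directions, while leaving all other positions free among the admissible Vicsek symbols. Equipping this Cantor set with a Bernoulli-type probability measure giving uniform mass to each admissible extension and applying the mass distribution principle produces the matching lower bound. The main difficulties I anticipate are: (i) converting the frequency assumption, via a Birkhoff-style argument along the digit sequence of $y_2$, into a \emph{uniform} lower bound of $3^{n(\xi-\lambda)\dimh V_y-o(n)}$ on the number of admissible horizontal extensions at level $\lceil n\xi\rceil$; (ii) choosing the growth of $n_k$ fast enough that the forced digit blocks contribute negligibly to the local dimension, yet slow enough that the i.o.\ condition is comfortably satisfied; and (iii) checking at each intermediate scale $r\in[3^{-n_k(1+\xi)},3^{-n_{k+1}}]$ that the local measure ratio is controlled by whichever of $A$ or $B$ is smaller, which is where the two regimes in the $\min$ appear. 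Putting these together yields $\dimh V_{\lambda,\xi}(y)\geq\min\{A,B\}$, matching the upper bound.
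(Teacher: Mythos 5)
Your outline is essentially the paper's route: cover the $\limsup$ set by level cylinders at two competing scales for the upper bound, and build a Cantor subset carrying a suitably concentrated measure for the lower bound via the mass distribution principle (Lemma~\ref{Mass distribution principle}); the paper runs this scheme in generality first (Theorem~\ref{main}, where your direct cylinder count becomes the quantity $A_{n,j}(z,w)$, and where one must a~priori minimize over all scales $j$ between $\lambda(n)$ and $\xi(n)$) and then specializes through Theorem~\ref{corollary}, in which the endpoint scales suffice because the limiting exponent is monotone in $j$. The one subtlety you elide is that $R_n(y)$ can straddle up to three adjacent $3$-adic rows, producing long carry chains in the vertical coding, so the count of admissible cylinders does not literally factorize position-by-position over the centre row; the paper controls this via the conditions in Definition~\ref{DefM(z,w)} and the maximum defining $A_{n,j}$, and for the Vicsek set it is harmless precisely because $\#J_2(0)=\#J_2(2)$ — the same symmetry that lets one drop the hypotheses $p_0<1$, $p_{b-1}<1$ of Theorem~\ref{corollary} here, as discussed in the remark following that theorem.
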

The case where $\lambda=\xi$ is the standard shrinking target problem for shrinking targets covered by \cite{HillVelani95}, see also \cite{AllenBarany} . The case with $\xi>\lambda$ is given by our Theorem \ref{corollary} which in turn follows from Theorem \ref{main}.
See \S~\ref{statement of results} for the definition of digit frequency. As seen in \S~\ref{statement of results} the condition that the digit frequency of $y_{2}$ exists can be removed. However, what this condition does allow us to do is state our result in terms of the Hausdorff dimension of the horizontal slice. Note that our general result (Theorem~\ref{main}) gives the exact Hausdorff dimension formula for any choice of $y\in V$ in the above theorem, but the formula becomes more complicated when $y$ is not a ``typical'' point in the fractal. See \S~\ref{further remarks} for {further discussion on this matter}. \par 
Note interestingly that the dimension depends on the centre of our target. For example
\begin{equation*}
   \dimh V_{\lambda,\xi}((0,0)) = \min \left\{ \frac{\dimh V}{1+\lambda}, \frac{1}{1+\xi}\left( \dimh V + (\xi-\lambda)\dimh \cK\right)\right\}, 
\end{equation*}
where $\cK$ denotes the middle-third Cantor set, but
\begin{equation*}
    \dimh V_{\lambda,\xi}\left(\left(\tfrac{1}{2},\tfrac{1}{2}\right)\right) =  \frac{\dimh V}{1+\xi}\, .
\end{equation*}

The paper is laid out accordingly. In the following subsection we give a generalised setup in which our results are applicable. In \S~\ref{statement of results} we state our main results, firstly in a special case where the Hausdorff dimension can be stated in a intuitive manner (Theorem~\ref{corollary}) and then in the general case where the Hausdorff dimension is a little more complicated (Theorem~\ref{main}). The weakening required to state the special case, in comparison to the general case, is discussed further in \S~\ref{further remarks} where various examples are presented. In \S~\ref{prelims} and \ref{proof} we prove our main result and in \S~\ref{proof of corollary} our special case is {deduced} via the general case. \par 
We should mention that at the time of preparation of this article we discovered that Edouard Daviaud was simultaneously proving a similar result, but with different techniques. This was done via mass transference principles and can be found in \cite{Daviaudpreprint}. Theorem~3.4 in \cite{Daviaudpreprint} is essentially the same result as our Corollary \ref{corergodic}.

\subsection{Our setting}

Fix $b \in \N$ such that $b \geq 2$ and let $J \subset \{0, \dots , b-1\}^{2}$ be a proper subset with $\#J \geq 2$. For each $(u,v) \in J$, let $f_{u,v}:[0,1]^2 \to [0,1]^2$ be the map defined by
\begin{equation*}
    f_{u,v}(x,y)=\left(\frac{x+u}{b}, \frac{y+v}{b} \right).
\end{equation*}
Consider the self-similar iterated function system 
\[\Phi=\{f_{u,v} : (u,v) \in J\}\] 
and let $\Lambda$ be the attractor of $\Phi$. That is, $\Lambda$ is the unique non-empty compact subset of $[0,1]^2$ for which
\[\Lambda = \bigcup_{(u,v) \in J}{f_{u,v}(\Lambda)}.\]
The fact that such a set $\Lambda$ exists is a classical result due to Hutchinson \cite{Hutchinson}. Since all of the maps in $\Phi$ are similarities, we call $\Phi$ a \emph{self-similar iterated function system} and call $\Lambda$ a \emph{self-similar set} (or, to be more precise, the \emph{self-similar set associated to $\Phi$}).

For any $n \in \N$ and any $\bt=((u_{1},v_{1}), \dots , (u_{n},v_{n})) \in J^{n}$ we will use the shorthand notation 
\[f_{\bt}=f_{u_{1},v_{1}} \circ \dots \circ f_{u_{n},v_{n}}.\] 
For a point $(x,y) \in \Lambda$, let $f_{\bt}(x,y)_{i}$ denote the $i$th coordinate value of $f_{\bt}(x,y)$. For each $a \in \{0, \dots , b-1\}$, define
\[J_{1}(a)=\{(u,v) \in J : u=a\},\]
and    
\[J_{2}(a)=\{(u,v) \in J : v=a\}.\]
 If we consider taking the unit square $[0,1]^2$ and splitting it into a $b \times b$ grid of squares (with rows and columns indexed by $0,1,\dots,b-1$), each of the maps $f_{u,v} \in \Phi$ maps the unit square to the smaller square of side-length $b^{-1}$ in the $u$th column and $v$th row. The set $J_{2}(a)$ therefore consists of the pairs $(u,v) \in J$ corresponding to maps $f_{u,v}$ with images belonging to the $a$th row of this $b \times b$ grid. Similarly, the set $J_{1}(a)$ consists of the pairs $(u,v) \in J$ corresponding to maps $f_{u,v}$ with images belonging to the $a$th column.

For any $(x,y) \in \Lambda$ we may write
\begin{equation} \label{sequence}
    (x,y)=\left( \sum_{i=1}^{\infty}{\frac{x_{i}}{b^{i}}}, \sum_{i=1}^{\infty}{\frac{y_{i}}{b^{i}}} \right)
\end{equation}
where $(x_{i},y_{i}) \in J$ for all $i \in \N$. \emph{A priori}, the sequence $(x_{i},y_{i})_{i \in \N} \in J^{\N}$ yielding \eqref{sequence} may not be unique. Let $A(x,y)$ denote all sequences $(x_{i},y_{i})_{i \in \N} \in J^{\N}$ satisfying \eqref{sequence}. For each $(x,y) \in \Lambda$, we pick a unique representative sequence $(x_{i},y_{i})_{i \in \N}$ from $A(x,y)$ as follows: if $\#A(x,y) >1$, in the first instance choose the sequence such that
\begin{equation*}
    \prod_{i=1}^{N} \#J_{2}(y_{i}) = \max_{(x_{i}',y_{i}')_{i \in \N} \in A(x,y)} \prod_{i=1}^{N} \#J_{2}(y_{i}')  \quad \text{for all } N \geq N_{0}
\end{equation*}
for some $N_{0} \in \N$. If more than one sequence satisfies the above equality, choose the one with the most $(0,0)$ terms, then the most $(0,b-1)$ terms, and lastly the most $(b-1,0)$ terms. This ordering is not particularly important, it is simply to ensure uniqueness. Let $\bar{\Sigma}$ denote the collection of all of our unique representative sequences. Generally, the sequences in $\bar{\Sigma}$ are chosen so that, where possible, the elements of those sequences belong to the most populated row $J_{2}(a)$.
  
Define $T:\Lambda \to \Lambda$ \label{T definition} by
\begin{equation*}
    T(x,y) = T((x_1,x_2,\dots), (y_1,y_2,\dots)) =   (bx-x_1, by-y_1).
\end{equation*}
where $(x,y) \in \bar{\Sigma}$.

Throughout let $(\lambda(n))_{n \in \N}$ and $(\xi(n))_{n \in \N}$ be sequences of positive integers with $\xi(n) \geq \lambda(n)$ for all $n \in \N$ and $\lambda(n) \to \infty$ as $n \to \infty$. Let $(z,w) \in \Lambda$ and consider the shrinking target set
\begin{equation*}
    \Lambda_{\lambda,\xi}(z,w) = \left\{ (x,y) \in \Lambda \, : \, T^{n}(x,y) \in [z-b^{-\lambda(n)},z+b^{-\lambda(n)}]\times [w-b^{-\xi(n)}, w+b^{-\xi(n)}] \text{ for i.m. } \, n \in \N \right\}.
\end{equation*}
Let us write
\[J^* = \bigcup_{n=1}^{\infty}{J^n}\]
to denote the set of all possible finite (non-empty) sequences consisting of elements from $J$.
Note that $\Lambda_{\lambda,\xi}(z,w)$ is closely related to the set 
\begin{align} \label{W set}
W_{\lambda,\xi}(z,w)&:=\left\{(x,y) \in \Lambda: \left|x-f_{\bt}(z,w)_1\right|\leq b^{-\lambda(|\bt|)-|\bt|} \text{ and }\right. \nonumber \\ 
&\phantom{=========}\left. \left|y-f_{\bt}(z,w)_2\right|\leq b^{-\xi(|\bt|)-|\bt|} \text{ for i.m. } \bt \in J^*\right\},
\end{align}
which also appears in the literature, see for example \cite{AllenBarany,WangWu}. In particular, we have the following {relations between} the two sets.

\begin{proposition} \label{relationship between sets}
Fix $(z,w)\in\Lambda$. {Then,}
\begin{enumerate}[(i)]
\item If $z \not\in \{0,1\}$ and $w \not\in \{0,1\}$, then $\Lambda_{\lambda,\xi}(z,w)=W_{\lambda,\xi}(z,w)$.
\item If $z \in \{0,1\}$ or $w \in \{0,1\}$, then $\Lambda_{\lambda,\xi}(z,w) \subseteq W_{\lambda,\xi}(z,w)$ and \[W_{\lambda,\xi}(z,w) \subseteq \bigcup_{(r,s)\in\{-1,0,1\}^2} \Lambda_{\lambda,\xi}(z+r,w+s).\]
\end{enumerate}
\end{proposition}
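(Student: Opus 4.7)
The proof rests on a single algebraic identity followed by a short case analysis on how the approximating rectangle sits relative to the cylinders of $\Phi$. First, for $(x,y)\in\Lambda$ with chosen representative $((x_i,y_i))_{i\in\N}\in\bar{\Sigma}$ and prefix $\bt_n:=((x_1,y_1),\dots,(x_n,y_n))\in J^n$, a direct computation from the definition of $T$ (which simply strips the first digit of the representative) gives $(x,y)=f_{\bt_n}(T^n(x,y))$. Because $f_{\bt_n}$ is a similarity of ratio $b^{-n}$, the condition $T^n(x,y)\in[z-b^{-\lambda(n)},z+b^{-\lambda(n)}]\times[w-b^{-\xi(n)},w+b^{-\xi(n)}]$ is equivalent to the pair of inequalities $|x-f_{\bt_n}(z,w)_1|\le b^{-n-\lambda(n)}$ and $|y-f_{\bt_n}(z,w)_2|\le b^{-n-\xi(n)}$. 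Taking $\bt=\bt_n$ in the definition of $W_{\lambda,\xi}(z,w)$ immediately yields $\Lambda_{\lambda,\xi}(z,w)\subseteq W_{\lambda,\xi}(z,w)$ with no restriction on $(z,w)\in\Lambda$, settling one direction in both (i) and (ii).

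For the converse in (i), fix $z,w\in(0,1)$ and set $\delta:=\min\{z,1-z,w,1-w\}>0$. Let $(x,y)\in W_{\lambda,\xi}(z,w)$ and let $\bt\in J^n$ be any witness. The centre $f_{\bt}(z,w)$ lies at distance at least $\delta b^{-n}$ from every edge of the cylinder square $f_{\bt}([0,1]^2)$. Since $\xi(n)\ge\lambda(n)\to\infty$, for all sufficiently large $n$ the approximating rectangle sits strictly in the interior of $f_{\bt}([0,1]^2)$. In the grid set-up the cylinders $\{f_{\bt'}([0,1]^2):\bt'\in J^n\}$ have pairwise disjoint interiors, so $(x,y)$ must lie only in $f_{\bt}([0,1]^2)$; consequently its representative begins with $\bt$, i.e.\ $\bt_n=\bt$. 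The identity of the previous paragraph then places $T^n(x,y)$ in the prescribed rectangle around $(z,w)$ for infinitely many $n$, so $(x,y)\in\Lambda_{\lambda,\xi}(z,w)$.

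For the remaining inclusion in (ii) (where $z\in\{0,1\}$ or $w\in\{0,1\}$), the centre $f_{\bt}(z,w)$ now sits on an edge of $f_{\bt}([0,1]^2)$, and the approximating rectangle may straddle up to four adjacent cells. For each witness $\bt\in J^n$ let $\bt_n$ be the length-$n$ prefix of the representative of $(x,y)$, so that $(x,y)\in f_{\bt_n}([0,1]^2)$. Since $|x-f_{\bt}(z,w)_1|\le b^{-n-\lambda(n)}<b^{-n}$ and the analogue holds for $y$, the $b^{-n}\times b^{-n}$ grid cells $f_{\bt_n}([0,1]^2)$ and $f_{\bt}([0,1]^2)$ must either coincide or be immediate neighbours; a coordinate computation then gives $f_{\bt_n}^{-1}(f_{\bt}(z,w))=(z+r,w+s)$ with $(r,s)\in\{-1,0,1\}^2$ recording this neighbour relation. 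Applying $f_{\bt_n}^{-1}$ (which scales by $b^n$) to the two closeness estimates yields $|T^n(x,y)_1-(z+r)|\le b^{-\lambda(n)}$ and $|T^n(x,y)_2-(w+s)|\le b^{-\xi(n)}$. A pigeonhole on the nine possible values of $(r,s)$ gives a single value hit by infinitely many witnesses, whence $(x,y)\in\Lambda_{\lambda,\xi}(z+r,w+s)$.

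The main obstacle is the cylinder bookkeeping in the last step: checking that an approximating rectangle of width less than $b^{-n}$ forces $\bt_n$ to be a grid neighbour of $\bt$, and that the induced shift $(r,s)$ does land in $\{-1,0,1\}^2$. Both follow readily from the affine form of each $f_{\bt}$, but have to be tracked with care because of the representative-choice subtleties on the boundaries of cylinder cells.
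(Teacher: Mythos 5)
Your proof is correct and, modulo phrasing, follows the same overall route as the paper's proof via its Lemma~\ref{Lemma 1}: the unconditional inclusion $\Lambda_{\lambda,\xi}(z,w)\subseteq W_{\lambda,\xi}(z,w)$ via the identity $(x,y)=f_{\bt_n}(T^n(x,y))$, the reverse inclusion for interior $(z,w)$, and the nine-fold covering in the boundary case. The one difference worth noting is cosmetic rather than structural: where the paper argues algebraically (a reverse-triangle estimate showing $\sum_{i=1}^n(x_i-u_i)b^{n-i}\in\{-1,0,1\}$, and then for part (i) ruling out $\pm1$ using $z\in(b^{-k},1-b^{-k})$), you argue geometrically about grid-cell adjacency and strict interiority of the approximating rectangle; this makes your treatment of part~(i) a shade more direct, since you get $\bt_n=\bt$ in one step rather than first establishing the general $\{-1,0,1\}$ statement and then refining, but both rest on the same underlying observation that $\lambda(n)\to\infty$ eventually confines the rectangle to a single cylinder cell.
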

This proposition is proved towards the end of \S~\ref{prelims}.

\section{Statements of results} \label{statement of results}

\subsection{A special case}
 We prove the following statement regarding the Hausdorff dimension of $\Lambda_{\lambda,\xi}(z,w)$.

\begin{theorem} \label{corollary}
Fix $(z,w) \in \Lambda$ and let $(z_{i},w_{i})_{i \in \N} \in \bar{\Sigma}$ be its unique representative sequence. Thus,
\begin{equation*}
(z,w)=\left( \sum_{i=1}^{\infty} z_{i}b^{-i} , \sum_{i=1}^{\infty} w_{i} b^{-i} \right).
\end{equation*} 
Suppose the limits 
\[\lambda:=\lim_{n \to \infty} \frac{\lambda(n)}{n} \quad \text{and} \quad \xi:=\lim_{n \to \infty} \frac{\xi(n)}{n}\] 
exist and that $\xi>\lambda$. Further, suppose that the limits
\begin{equation*}
    p_{a}(z,w):=\lim_{n \to \infty}\frac{\#\left\{1 \leq i \leq n : w_{i}=a \right\}}{n}
\end{equation*}
exist for each $a \in \{0, \dots, b-1\}$ and that $p_{0}(z,w)<1$ and $p_{b-1}(z,w)<1$. Then,
\begin{equation*}
    \dimh \Lambda_{\lambda,\xi}(z,w) = \min\left\{ \frac{\gamma}{1+\lambda}, \frac{1}{1+\xi}\left(\gamma + (\xi-\lambda)\gamma(z,w)_{2} \right) \right\}\, ,
\end{equation*}
where
\begin{align*}
    \gamma &=\dimh \Lambda = \frac{\log\#J}{\log b} 
\end{align*}
and
\begin{align*}
    \gamma(z,w)_{2} &=\dimh \left(\Lambda \cap \left\{(x,y) \in [0,1]^{2} : y=w\right\}\right) \\
                    &=\frac{1}{\log b} \sum_{a \in \{0, \dots , b-1\}} p_{a}(z,w) \log \#J_{2}(a)\,.
\end{align*}
\end{theorem}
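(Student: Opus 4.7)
The plan is to deduce Theorem~\ref{corollary} from Theorem~\ref{main}, which will give a formula for $\dimh\Lambda_{\lambda,\xi}(z,w)$ in terms of Ces\`aro averages along the $y$-address $(w_i)_{i\in\N}$ of $(z,w)$. Under the strong hypothesis that the digit frequencies $p_a(z,w)$ exist, these averages converge, and the general formula collapses to the explicit minimum stated.

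The first step is to establish that the horizontal slice $\Lambda\cap\{y=w\}$ has Hausdorff dimension equal to the claimed
\[\gamma(z,w)_2 \;=\; \frac{1}{\log b}\sum_{a=0}^{b-1}p_a(z,w)\log\#J_2(a).\]
By construction, the slice consists precisely of those $x\in[0,1]$ whose base-$b$ expansion $(x_i)$ satisfies $(x_i,w_i)\in J$ for every $i$, which means $x_i$ lies in a set of size $\#J_2(w_i)$. The slice therefore carries a natural Moran structure with $\prod_{i=1}^n\#J_2(w_i)$ cylinders of diameter $b^{-n}$ at level $n$. The frequency hypothesis gives
\[\frac{1}{n}\log\prod_{i=1}^n\#J_2(w_i)\;\longrightarrow\;\sum_a p_a(z,w)\log\#J_2(a),\]
immediately yielding the upper box, hence Hausdorff, dimension bound. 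For the matching lower bound I would distribute mass equally among the $\#J_2(w_i)$ choices at each level and apply the mass distribution principle, using the convergence above to verify uniform local dimension estimates. The hypotheses $p_0(z,w)<1$ and $p_{b-1}(z,w)<1$ exclude the degenerate situation where the canonical representative from $\bar{\Sigma}$ eventually stabilises at a boundary digit and misrepresents the true frequencies of $w$.

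The second step is to apply Theorem~\ref{main}. I expect its output, under $\lambda(n)/n\to\lambda$ and $\xi(n)/n\to\xi$, to be of the shape $\min\{A,B\}$, where $A=\gamma/(1+\lambda)$ is the "shrinking square" contribution (independent of the slice) and
\[B\;=\;\frac{1}{1+\xi}\!\left(\gamma+(\xi-\lambda)\cdot\frac{1}{\log b}\lim_{n\to\infty}\frac{1}{n}\sum_{i=1}^n\log\#J_2(w_i)\right)\]
reflects the extra freedom available in the long direction of the rectangle, weighted by the slice-dimension contribution from the $y$-address. Substituting the converging averages and invoking the slice dimension identity produces the stated formula. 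One technical point is that Theorem~\ref{main} may be formulated for $W_{\lambda,\xi}(z,w)$ rather than $\Lambda_{\lambda,\xi}(z,w)$; Proposition~\ref{relationship between sets} reconciles these, noting that the frequency hypotheses $p_0(z,w)<1$ and $p_{b-1}(z,w)<1$ force $w\notin\{0,1\}$, and one checks analogously that the canonical representative choice ensures $z\notin\{0,1\}$ as well.

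The main obstacle is the slice dimension identity, specifically the lower bound: while the upper bound is straightforward box-counting, the Hausdorff lower bound requires a careful mass distribution argument showing that local dimensions of the natural Bernoulli-type measure on the slice equal $\gamma(z,w)_2$ almost everywhere, exploiting the convergence of the $p_a(z,w)$ via an elementary Shannon--McMillan--Breiman-style calculation. The remainder of the proof is essentially bookkeeping: tracking how Theorem~\ref{main}'s general expression (which likely features $\liminf$s or $\limsup$s of the frequencies when these need not exist) simplifies when all limits exist, and ruling out the boundary pathologies excluded by the hypotheses on $p_0(z,w)$ and $p_{b-1}(z,w)$.
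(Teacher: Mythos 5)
Your high-level plan — deduce Theorem~\ref{corollary} from Theorem~\ref{main} — is correct, but you have misidentified where the real work lies, and you are missing the actual bridge between the two theorems.

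You label the slice dimension identity $\dimh\Lambda^w = \frac{1}{\log b}\sum_a p_a(z,w)\log\#J_2(a)$ as ``the main obstacle'' and propose a mass-distribution argument for it. In the paper this identity is not part of the proof at all: it is a known fact about Moran/slice constructions, cited to Bishop--Peres in Remark~\ref{horizontal slice} and used only to justify the two equivalent expressions for $\gamma(z,w)_2$ appearing in the theorem statement. The genuine technical content that you never supply is how to reduce the abstract quantity $A_{n,j}(z,w)$ in the formula $s_n=\min\{(n\log\#J+A_{n,j}(z,w))/((n+j)\log b):\lambda(n)\le j\le\xi(n)\}$ to the frequency expression $(j-\lambda(n))\sum_a p_a(z,w)\log\#J_2(a)+o(n)$. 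Recall that $A_{n,j}(z,w)$ is a maximum over all words in $M_n(z,w)$, and words in $M_n(z,w)$ are only required to agree with $w$ in the second coordinate up to position $k_w(n)$, which can in principle be much smaller than $\xi(n)$ if the tail of $(w_i)$ consists of long blocks of $0$'s or $(b-1)$'s. The paper therefore first proves that $k_w(n)/n\to\xi$ (Lemma~\ref{freq}); this is precisely where the hypotheses $p_0(z,w)<1$ and $p_{b-1}(z,w)<1$ are used, not (as you suggest) to clean up a degeneracy in the choice of canonical representative. Only once $k_w(n)/n\to\xi$ is known can one conclude that the maximising word in $M_n(z,w)$ contributes essentially the same digit counts as $w$ itself, which gives Lemma~\ref{keybound}. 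You also omit the step where one shows $\limsup s_n$ reduces to a minimum over the two endpoint values $j\approx\lambda(n)$ and $j\approx\xi(n)$: this relies on extracting a subsequence with $j(n_i)/n_i\to u\in[\lambda,\xi]$ and observing that $u\mapsto\frac{1}{1+u}(\gamma+(u-\lambda)\gamma(z,w)_2)$ is monotone, hence extremised at an endpoint.

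Finally, your claim that the hypotheses and canonical representative ``ensure $z\notin\{0,1\}$'' is false — nothing in the statement constrains $z$ at all — but this is harmless, since Theorem~\ref{main} is stated directly for $\Lambda_{\lambda,\xi}(z,w)$ and one never needs to pass through $W_{\lambda,\xi}(z,w)$ or Proposition~\ref{relationship between sets} in this deduction.
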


\begin{remark} \rm \label{horizontal slice}
In the above, $p_{a}(z,w)$ is the limiting frequency of the digit $a$ in the word $(w_{i})_{i \in \N}$, $\gamma$ is the Hausdorff dimension of $\Lambda$, and $\gamma(z,w)_{2}$ is the Hausdorff dimension of the horizontal slice through $\Lambda$ including the point $(z,w)$. To see this, note that the horizontal slice at height~$w$ (as long as $w$ does not have $2$ different $b$-expansions  which would imply $p_0(z,w)=1$ or $p_{b-1}(z,w)=1$) can be expressed as
$$\Lambda^{w}=\left\{\sum_{i=1}^{\infty}x_ib^{-i}:(x_i,w_i)\in J\right\}.$$
Following the methods in example 1.4.2 in Bishop and Peres \cite{BishopPeres} (they cover the case $b=2$ but the general case is a simple adaptation of this) we get that
$$\dimh \Lambda^{w}=\liminf_{n\to\infty}\frac{\sum_{a=0}^{b-1}n^{-1}\#\{1\leq i\leq n:w_i=a\}\log\#J_2(a)}{\log b}.$$
Thus, in the case where the limits 
\begin{equation*}
    p_{a}(z,w):=\lim_{n \to \infty}\frac{\#\left\{1 \leq i \leq n : w_{i}=a \right\}}{n}
\end{equation*}
exist for all $0\leq a\leq b-1$ with $p_0(z,w) \neq 1$ and $p_{b-1}(z,w)\neq 1$, we get that
$$\dimh \Lambda^{w}=\frac{\sum_{a \in \{0, \dots , b-1\}} p_{a}(z,w) \log \#J_{2}(a)}{\log b}.$$
\end{remark}

\begin{remark} \rm
Theorem~\ref{corollary} follows from a more general result (Theorem \ref{main}) given in the following section. At the end of the paper we comment on what happens when the conditions in Theorem \ref{corollary} are relaxed.
\end{remark}

\begin{remark} \rm
Theorem~\ref{example result} can be deduced from Theorem~\ref{corollary} as follows. The case when $\lambda=\xi$ {corresponds to the more traditional shrinking target problem for balls and} is already known (see \cite{AllenBarany} and \cite{HillVelani95}). Thus, we may suppose $\xi>\lambda$. {Next, notice in this case that $\xi(n)=n\xi$ and $\lambda(n)=n\lambda$}, so the limits {$\lim_{n \to \infty} \frac{\lambda(n)}{n}$ and $\lim_{n \to \infty} \frac{\xi(n)}{n}$} exist. {Furthermore}, since we ask that the base $3$ digit frequencies of $y_{2}$ exist{, we have that} the limits $p_{a}(y_{1},y_{2})$ exist {for each $0 \leq a \leq 2$}. Now, it is entirely possible that we have $p_{0}(y_{1},y_{2})=1$ or $p_{2}(y_{1},y_{2})=1$. {However, in this case, this is not a problem due to the symmetry of the cross fractal; in particular, we have $\#J_{2}(0)=\#J_{2}(2)$.} See \eqref{zeros case} and \eqref{b-1 case} in \S~\ref{further remarks} for further details.
\end{remark}

\subsection{The general result}

We begin by defining the following subset of words. 

\begin{definition} \label{DefM(z,w)}
Let $(z,w) \in \Lambda$ and let $(z_{i},w_{i})_{i \in \N} \in \bar{\Sigma}$ be its unique representative sequence.
Let $(\xi(n))_{n \in \N}$ and $(\lambda(n))_{n \in \N}$ be sequences of positive integers with $\xi(n) \geq \lambda(n)$ for all $n \in \N$. For $n \in \N$, let $M_{n}(z,w)$ denote the set of words $(x_{i},y_{i})_{i \in \N} \in \bar{\Sigma}$ such that the following two conditions are met:
\begin{enumerate}[(1)]
\item{
\begin{enumerate}[(i)]
\item{$x_{i+n}=z_{i}$ for $1 \leq i < \lambda(n)$ 
{\bf \emph{or}}}
\item{there exists $1\leq j < \lambda(n)$ such that $x_{i+n}=z_{i}$ for $1\leq i \leq j-1$ and either
\begin{enumerate}[(a)] 
\item{$x_{j+n}-z_{j}=-1$ and $x_{i+n}-z_{i}=b-1$ for $j+1 \leq i < \lambda(n)$, or}
\item{$x_{j+n}-z_{j}=1$ and $z_{i}-x_{i+n}=b-1$ for $j+1 \leq i < \lambda(n)$.}
\end{enumerate}}
\end{enumerate}}
\item{
\begin{enumerate}[(i)]
\item{$y_{i+n}=w_{i}$ for $1 \leq i < \xi(n)$ {\bf \emph{or}}} 
\item{there exists $1 \leq j < \xi(n)$ such that $y_{i+n}=w_{i}$ for $1\leq i \leq j-1$ and either
\begin{enumerate}[(a)] 
\item{$y_{j+n}-w_{j}=-1$ and $y_{i+n}-w_{i}=b-1$ for $j+1 \leq i < \xi(n)$, or}
\item{$y_{j+n}-w_{j}=1$ and $w_{i}-y_{i+n}=b-1$ for $j+1 \leq i < \xi(n)$.}
\end{enumerate}}
\end{enumerate}}
\end{enumerate}
 \end{definition}
 
Essentially $M_{n}(z,w)$ contains the set of all words $(x_{i},y_{i})_{i \in \N} \in \bar{\Sigma}$ that are close to $(z_{i},w_{i})_{i \in \N}$, where closeness is determined by $n$ and the integers $\lambda(n)$ and $\xi(n)$. Generally, the larger $n$ is, the closer the points in $M_n(z,w)$ will be to $(z_i,w_i)_{i \in \N}$. Since each sequence $(x_{i},y_{i})_{i \in \N} \in \bar{\Sigma}$ maps to a unique point $(x,y) \in \Lambda$, we may often simply write "$(x,y) \in M_{n}(z,w)$", meaning that there exists a sequence $(x_{i},y_{i})_{i \in \N} \in M_{n}(z,w)$ such that $(x,y)=\Pi((x_{i},y_{i})_{i \in \N})$. Here, $\Pi: \bar{\Sigma} \to \Lambda$ denotes the usual \emph{projection mapping}, as defined in Section \ref{prelims}. For a fixed $n \in \N$ and $(z_{i},w_{i})_{i \in \N}$, there will exist a unique integer $k_{w}(n)$ with $1 \leq k_{w}(n) < \xi(n)$ such that all words $(x_{i},y_{i})_{i \in \N} \in M_{n}(z,w)$ will satisfy
 \begin{equation*}
 (x_{i+n},y_{i+n})_{i=1, \dots, \xi(n)}=(x_{i+n},w_{i})_{i=1, \dots, k_{w}(n)}(x_{j+n},y_{j+n})_{j=k_{w}(n)+1, \dots , \xi(n)}\, .
 \end{equation*}
That is, for each $(x_{i},y_{i})_{i \in \N} \in M_{n}(z,w)$, $(y_{i+n})_{i=1,\dots, k_{w}(n)}$ will be the same as $(w_{i})_{i=1,\dots,k_{w}(n)}$ irrespective of whether later digits agree or not. 

For each $a \in \{0, \dots , b-1\}$, $n \in \N$, $y \in \Lambda_{2}=\{y : (x,y) \in \Lambda \text{ for some } x\}$, and $j \geq \lambda(n)$, let
\begin{equation*}
     p_{n,j}(a,y)=\#\{ \lambda(n) \leq i \leq j : y_{i+n}=a\}
\end{equation*}
and
\begin{equation*}
    A_{n,j}(z,w)= \max_{(x,y) \in M_{n}(z,w)} \left\{ \sum_{a \in \{0, \dots , b-1\}} p_{n,j}(a, y) \log\#J_{2}(a) \right\}.
\end{equation*}

Note that for $j>\xi(n)$ there are no restrictions on the subword $(x_{i},y_{i})_{i=\xi(n)+1, \ldots, j}$ of \newline\mbox{$(x_{i},y_{i})_{i\in\N}\in M_{n}(z,w)$}, so the choice for these letters, when considering $A_{n,j}(z,w)$, would simply be the one where $\#J_{2}(a)$ is largest. For $(x_{i},y_{i})_{i \in \N} \in \bar{\Sigma}$, corresponding to a point $(x,y) \in \Lambda$, the quantity $p_{n,j}(a,y)$ tells us about the frequency of the digit $a$ in the finite subword $y_{\lambda(n)}y_{\lambda(n)+1}\dots y_{j}$ of the word $(y_{i})_{i \in \N}$. The set $M_{n}(z,w)$ is essentially constructed as follows: given a fixed $(z,w) \in \Lambda$, we approximate $z$ in the horizontal axis by the $b$-adic interval of length $b^{-\lambda(n)}$ containing $z$, and then include the two other $b$-adic intervals adjacent. We repeat this in the vertical axis by approximating $w$ by a $b$-adic interval of length $b^{-\xi(n)}$ containing $w$, and again include the two other adjacent intervals. Taking the product of these $3$ intervals in each axis we obtain $9$ rectangles with sidelengths $b^{-\lambda(n)}$ and $b^{-\xi(n)}$ in the horizontal and vertical axes respectively, and with the centre rectangle containing $(z,w)$. Then, $M_{n}(z,w)$ is the collection of all words in $\bar{\Sigma}$ with image (under the usual projection mapping) contained in the collection of rectangles constructed. To calculate the value $A_{n,j}(z,w)$ we pick the rectangle, out of those constructed in describing $M_{n}(z,w)$, with the most $b$-adic cubes (of sidelength $b^{-\xi(n)}$) that have non-empty intersection with $\Lambda$. Then $A_{n,j}(z,w)$ is the logarithm of the cardinality of the non-empty intersecting $b$-adic cubes in this rectangle. \par

We prove the following statement regarding the Hausdorff dimension of $\Lambda_{\lambda, \xi}(z,w)$.
 
\begin{theorem} \label{main}
Let 
\begin{equation*}
    s_{n}=\min\left\{ \frac{n\log\#J + A_{n,j}(z,w)}{(n+j)\log b} \, : \, \lambda(n) \leq j \leq \xi(n) \right\}\, . 
\end{equation*}
Then
\begin{equation*}
 \dimh \Lambda_{\lambda,\xi}(z,w) = \limsup_{n \to \infty} s_{n}\, . 
\end{equation*}
\end{theorem}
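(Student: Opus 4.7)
The strategy splits into two halves, which I would tackle in sequence: the upper bound $\dimh \Lambda_{\lambda,\xi}(z,w) \leq \limsup_{n\to\infty} s_n$ via a natural cover by $b$-adic cylinders, and the lower bound $\dimh \Lambda_{\lambda,\xi}(z,w) \geq \limsup_{n\to\infty} s_n$ via a Cantor-subset construction together with the mass distribution principle. Throughout I would work symbolically on $\bar{\Sigma}$, using Proposition~\ref{relationship between sets} to pass between $\Lambda_{\lambda,\xi}(z,w)$ and the limsup set $W_{\lambda,\xi}(z,w)$ (the finitely many boundary translations introduce only a harmless multiplicative constant).

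For the upper bound, I would use that any point in $\Lambda_{\lambda,\xi}(z,w)$ lies, for infinitely many $n$, in a preimage $f_{\bt}([z-b^{-\lambda(n)},z+b^{-\lambda(n)}]\times[w-b^{-\xi(n)},w+b^{-\xi(n)}])$ for some $\bt\in J^n$. Fixing such a $\bt$ and one of the at most nine adjacent $b$-adic cells around $(z,w)$, I would cover the resulting rectangle, intersected with $\Lambda$, by level-$(n+j)$ cylinders for a chosen $\lambda(n)\leq j\leq \xi(n)$. The number of admissible cylinders is, up to a universal constant, $\#J^n\cdot\exp(A_{n,j}(z,w))$: the first factor counts the prefixes, and the second captures the constrained continuations, since for positions $\lambda(n)<i\leq j$ the $x$-digit is free while $y_{i+n}$ is essentially fixed, giving exactly the product $\prod_i \#J_2(y_{i+n})$ whose logarithm equals $A_{n,j}(z,w)$ after optimising over $M_n(z,w)$. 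Choosing $j=j(n)$ to minimise the resulting $s$-dimensional sum and summing a geometric tail, for any $s>\limsup_n s_n$ one obtains $\hs(\Lambda_{\lambda,\xi}(z,w))=0$.

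For the lower bound, I would select a rapidly growing subsequence $(n_k)$ along which $s_{n_k}\to \limsup_n s_n$ (with $n_{k+1}$ much larger than $n_k+\xi(n_k)$), together with a minimising $j_k\in[\lambda(n_k),\xi(n_k)]$ and a word in $M_{n_k}(z,w)$ realising $A_{n_k,j_k}(z,w)$. I would then construct a Cantor subset $F\subseteq \Lambda_{\lambda,\xi}(z,w)$ consisting of sequences in $\bar{\Sigma}$ that are entirely free over the connecting blocks $(n_k+\xi(n_k),n_{k+1}]$ and that match the chosen optimal $M_{n_k}$-pattern on the $k$th ``target block'' $(n_k,n_k+\xi(n_k)]$, so that $T^{n_k}(x,y)$ indeed lies in the target rectangle. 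On $F$ I would place the natural product-type measure $\mu$ that is uniform on admissible continuations at each position. Applying the mass distribution principle, it then suffices to verify a Frostman-type bound $\mu(B((x,y),r))\lesssim r^{\limsup_n s_n-\varepsilon}$ for every small $r>0$, by separating cases according to whether $r$ falls in a connecting block, in a ``rectangle'' scale $b^{-(n_k+j)}$ with $\lambda(n_k)\leq j\leq \xi(n_k)$, or in a square scale below $b^{-(n_k+\xi(n_k))}$, and in each case matching the counting against the corresponding $A_{n_k,j}(z,w)$.

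The main technical difficulty --- and the reason the minimum over $j$ appears in the definition of $s_n$ --- lies in the Frostman check at the rectangle scales $b^{-(n_k+j)}$ with $\lambda(n_k)\leq j\leq \xi(n_k)$. Here $\mu$ distributes mass anisotropically because the $y$-coordinate is essentially determined while the $x$-coordinate still retains $\prod\#J_2(y_i)$ degrees of freedom; the effective local exponent at scale $b^{-(n_k+j)}$ is exactly $(n_k\log\#J+A_{n_k,j}(z,w))/((n_k+j)\log b)$, whose minimum over $j$ is $s_{n_k}$. Matching upper and lower bounds therefore hinges on choosing the extremal $j_k$ correctly and on controlling, via the combinatorial structure of $M_{n_k}(z,w)$, the Frostman exponent uniformly across the full range of intermediate scales.
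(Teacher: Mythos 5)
Your proposal follows the same two-sided strategy as the paper: an upper bound by covering with level-$(n+j)$ cylinders counted via $(\#J)^n\exp(A_{n,j}(z,w))$, and a lower bound via a Cantor subset along a sparse subsequence $(n_k)$ equipped with a product-type measure and the mass distribution principle. The cover in the upper bound matches the paper's $C_{n,j(n)}$, and the measure construction (free blocks between targets, forced digits on a $\lambda(n_k)$-length prefix of each target block, freedom within a row $J_2(\hat y)$ for the remainder) is exactly what the paper builds.

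The one genuine gap is in the lower bound, and it sits exactly where you say the ``main technical difficulty'' is, but your text does not actually close it. You assert that the effective local exponent at scale $b^{-(n_k+j)}$ is \emph{exactly} $\bigl(n_k\log\#J+A_{n_k,j}(z,w)\bigr)/\bigl((n_k+j)\log b\bigr)$, but the measure $\mu$ is built from one fixed word $\hat y\in M_{n_k}(z,w)$ which realises $A_{n_k,j_k}$ only at a single index $j_k$. At other intermediate scales $\lambda(n_k)\le j\le\xi(n_k)$ all one knows a priori is $\sum_{i=\lambda(n_k)}^j\log\#J_2(\hat y_{i+n_k})\le A_{n_k,j}(z,w)$, and this inequality points the \emph{wrong} way: it gives an upper bound on the local Frostman exponent where the mass distribution principle needs a lower bound. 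Since $s_{n_k}$ is itself a minimum over $j$, you cannot conclude $\nu(B((x,y),r))\lesssim r^{s_{n_k}-\varepsilon}$ at every scale $j$ simply by comparing termwise with $A_{n_k,j}$; you need an extra argument. What makes this work is the rigid combinatorial structure of $M_{n_k}(z,w)$: beyond the common prefix $k_w(n_k)$ the admissible $y$-tails form a very small family (essentially ``stay with $w$'' or ``depart once and then run along a forced $0$- or $(b-1)$-tail''), and one must use this to choose $\hat y$ so that the local exponent is $\ge s_{n_k}-o(1)$ simultaneously for all $j$ in the range, or replace the uniform claim by a more careful analysis of where the minimum of the true Frostman exponent actually occurs. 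Flagging the difficulty is not the same as resolving it; without this step the lower bound is unproven.
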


\section{Preliminaries} \label{prelims}

We recall the definitions of Hausdorff measure and {Hausdorff} dimension. For more details see \cite{Falconer}. For any $0 < \rho \leq \infty$, any finite or countable collection of balls $(B_i)_{i\geq 1}$ contained in $\R^{2}$ such that
$F\subseteq \bigcup_i B_i$ and $r(B_{i})<\rho$ for all $i$, is called a \emph{$\rho$-cover} of $F$. {Here we use the notation $r(B)$ to denote the radius of a ball $B$ in $\R^2$.} For $s>0$ let
\begin{equation*}
\cH_{\rho}^{s}(F)=\inf \left\{ \sum_{i} r(B_{i})^{s} : \{B_{i}\} \text{ is a $\rho$-cover of } F \right\}.
\end{equation*} 
The \textit{$s$-dimensional Hausdorff measure of $F$} is defined to be
\[
\cH^s(F)=\lim_{\rho\to 0}\cH_\rho^{s}(F).
\]
 The \textit{Hausdorff dimension} of $F$, denoted by $\dimh F$, is defined as
\begin{equation*}
\dimh F := \inf\left\{s\geq 0\;:\; \cH^s(F)=0\right\}\, .
\end{equation*}
{A fairly standard approach to computing the Hausdorff dimension of a set is to split the computations into two cases; an upper bound calculation and a lower bound calculation.} The upper bound calculation is {often} easier and can be realised by taking a standard cover of the set { while establishing lower bounds can frequently be somewhat} more {challenging. To this end,} the following \emph{mass distribution principle} {often proves to be useful (see, for example, \cite[Chapter 4.1]{Falconer}).}

\begin{lemma}[Mass Distribution Principle]\label{Mass distribution principle}
Let $\mu$ be a probability measure supported on a subset $X \subseteq \R^{2}$. Suppose that for {some} $s>0$ there exist constants $c {>0}$ {and} $\varepsilon>0$ such that
\begin{equation*} 
\mu(B) \leq c r(B)^{s}
\end{equation*}
for all open balls $B \subset \R^{2}$ with $r(B)<\varepsilon$. Then $\cH^{s}(X) \geq \frac{1}{c} {>0}$ and thus $\dimh X\geq s$.
\end{lemma}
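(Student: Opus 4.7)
The plan is to translate the hypothesis on the measure $\mu$ directly into a lower bound on the Hausdorff pre-measure $\cH_\rho^s(X)$ for every sufficiently small $\rho$, and then pass to the limit as $\rho \to 0$. This is a short direct argument using only countable subadditivity of $\mu$ and the definition of Hausdorff measure via ball covers given earlier in this section.

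First I would fix $\rho \in (0,\varepsilon)$ and let $(B_i)_{i\geq 1}$ be an arbitrary $\rho$-cover of $X$. By the paper's definition each $B_i$ is a ball with $r(B_i) < \rho < \varepsilon$, so the standing hypothesis applies uniformly and gives $\mu(B_i) \leq c\,r(B_i)^s$ for every $i$. Since $\mu$ is a probability measure supported on $X$ and $X \subseteq \bigcup_i B_i$, countable subadditivity of $\mu$ yields
$$1 = \mu(X) \leq \sum_{i \geq 1} \mu(B_i) \leq c \sum_{i \geq 1} r(B_i)^s,$$
so $\sum_i r(B_i)^s \geq 1/c$. As this bound holds for every $\rho$-cover of $X$, taking the infimum over all such covers gives $\cH_\rho^s(X) \geq 1/c$.

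The last step is to observe that the bound $1/c$ is independent of $\rho$, so letting $\rho \to 0$ in the defining formula $\cH^s(X) = \lim_{\rho \to 0} \cH_\rho^s(X)$ yields $\cH^s(X) \geq 1/c > 0$. Since $\dimh X$ is the infimal $t \geq 0$ for which $\cH^t(X) = 0$, positivity of $\cH^s(X)$ immediately forces $\dimh X \geq s$.

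I do not expect any serious obstacle here; the argument is essentially a direct comparison between a measure bound and a geometric sum. The only subtleties requiring care are that the measure hypothesis must be applied uniformly to every ball appearing in the cover (which is exactly what dictates choosing $\rho < \varepsilon$), and that the normalisation $\mu(X)=1$ of a probability measure is what produces the clean constant $1/c$ on the right-hand side.
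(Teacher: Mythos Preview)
Your argument is correct and is precisely the standard proof of the mass distribution principle. Note, however, that the paper does not actually supply a proof of this lemma; it merely states it and refers the reader to \cite[Chapter~4.1]{Falconer}. So there is no ``paper's own proof'' to compare against, but what you have written is exactly the short direct argument one would expect and matches the standard textbook proof.
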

\par

We use the notational conventions
\begin{equation*}
    \Sigma=J^{\N}, \qquad \Sigma^{n}=J^{n}, \quad \text{and} \quad \Sigma^{*}=\bigcup_{n=1}^{\infty}J^{n}.
\end{equation*}
We say that a function $f: \N \to \R$ satisfies $f(n) = o(g)$, where $g: \N \to \R_{>0}$, if $\frac{f(n)}{g(n)} \to 0$ as $n \to \infty$.
Let $\Pi:\Sigma \to \Lambda$ denote the usual projection mapping; thus,
\begin{equation*}
    \Pi((c_{i},d_{i})_{i \in \N}) = \left( \sum_{i=1}^{\infty}c_{i}b^{-i}, \sum_{i=1}^{\infty}d_{i}b^{-i} \right) \, .
\end{equation*}
Observe that $\Pi$ is injective when restricted to $\bar{\Sigma} \subset \Sigma$. For $(c,d) \in \Sigma$ and $m \in \N$, let
\begin{equation*}
    [(c_{i},d_{i})_{i=1, \dots , m}]=\left\{(u_{i},v_{i})_{i \in \N} \in \bar{\Sigma} : (u_{i},v_{i})=(c_{i},d_{i}) \text{ for } i=1, \dots , m \right\}.
\end{equation*}
 That is, $[(c_{i},d_{i})_{i =1, \dots , m}]$ is the cylinder containing all words $(x_{i},y_{i})_{i \in \N} \in \bar{\Sigma}$ with their first $m$ terms matching $(c_{i},d_{i})_{i =1, \dots , m}$. Finally, let $\sigma: \bar{\Sigma} \to \bar{\Sigma}$ denote the usual {left} shift map; that is,
\[\sigma\left( (c_{i},d_{i})_{i \in \N} \right)= (c_{i+1},d_{i+1})_{i \in \N}.\]
The following lemma follows immediately from the definition of the mapping $T$ {as given on page \pageref{T definition}}.

\begin{lemma}
For any $(c_{i},d_{i})_{i \in\N} \in\bar{\Sigma}$ we have that
\begin{equation*}
    T\circ \Pi\left((c_{i},d_{i})_{i \in \N}\right)=\Pi\circ \sigma \left( (c_{i},d_{i})_{i \in \N} \right).
\end{equation*}
\end{lemma}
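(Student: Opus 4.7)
The plan is a direct computation unravelling the definitions of $T$, $\Pi$ and $\sigma$. First I would set $(x,y) := \Pi((c_i,d_i)_{i\in\N})$, so that by definition of $\Pi$ one has
\begin{equation*}
x=\sum_{i=1}^{\infty} c_i b^{-i}, \qquad y=\sum_{i=1}^{\infty} d_i b^{-i}.
\end{equation*}
Because we have assumed $(c_i,d_i)_{i\in\N}\in\bar{\Sigma}$ and $\Pi$ is injective on $\bar{\Sigma}$ (as noted in the paragraph introducing $\Pi$), this sequence is precisely the unique representative of $(x,y)$ picked out by the selection rules defining $\bar{\Sigma}$. Consequently, the definition of $T$ on page~\pageref{T definition} applied at $(x,y)$ uses $x_1=c_1$ and $y_1=d_1$, giving $T(x,y)=(bx-c_1,\,by-d_1)$.

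Next I would carry out the elementary geometric-series calculation
\begin{equation*}
bx - c_1 = b\sum_{i=1}^{\infty} c_i b^{-i}-c_1 = c_1 + \sum_{i=2}^{\infty} c_i b^{-(i-1)} - c_1 = \sum_{j=1}^{\infty} c_{j+1} b^{-j},
\end{equation*}
and similarly $by-d_1=\sum_{j=1}^{\infty} d_{j+1}b^{-j}$. Comparing with
\begin{equation*}
\Pi\circ\sigma\bigl((c_i,d_i)_{i\in\N}\bigr)=\Pi\bigl((c_{i+1},d_{i+1})_{i\in\N}\bigr)=\left(\sum_{j=1}^{\infty} c_{j+1}b^{-j},\,\sum_{j=1}^{\infty} d_{j+1}b^{-j}\right)
\end{equation*}
then gives the claimed equality coordinate-wise.

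The only substantive thing to check is that the definition of $T$ is legitimately applied using $c_1,d_1$ as the first digits, which is where the hypothesis $(c_i,d_i)_{i\in\N}\in\bar{\Sigma}$ is essential: without it, the point $(x,y)$ might have a different unique representative in $\bar{\Sigma}$, and then $T(x,y)$ would be computed from those (possibly different) first digits rather than from $c_1,d_1$. With the hypothesis in force, however, this potential obstacle disappears and the statement reduces to the base-$b$ shift identity above, so the lemma follows immediately.
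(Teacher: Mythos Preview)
Your proof is correct and is exactly the direct unravelling of definitions that the paper has in mind; indeed, the paper does not even write out a proof, merely stating that the lemma ``follows immediately from the definition of the mapping $T$.'' Your observation that the hypothesis $(c_i,d_i)_{i\in\N}\in\bar{\Sigma}$ is what guarantees $T$ uses $c_1,d_1$ as the first digits is the only point requiring care, and you have identified it correctly.
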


The following lemma will be crucial
in the proof of Theorem \ref{main} in Section \ref{proof}. It essentially tells us that $M_{n}(z,w)$ is a good collection of words such that the corresponding cylinders cover the rectangle
\begin{equation*}
R\left((z,w), b^{-\lambda(n)}, b^{-\xi(n)}\right)=[z-b^{-\lambda(n)},z+b^{-\lambda(n)}]\times [w-b^{-\xi(n)}, w+b^{-\xi(n)}],
\end{equation*}
and are contained in the rectangle
\begin{equation*}
R\left((z,w), b^{-\lambda(n)+2}, b^{-\xi(n)+2}\right)=[z-b^{-\lambda(n)+2},z+b^{-\lambda(n)+2}]\times [w-b^{-\xi(n)+2}, w+b^{-\xi(n)+2}].
\end{equation*}
Throughout, for $(x,y) \in \R^{2}$ and $\alpha,\beta \in \R_{>0}$, we write $R((x,y),\alpha,\beta)$ to denote a rectangle with centre $(x,y)$ and sidelengths $2\alpha$ and $2\beta$.

 \begin{lemma} \label{upper}
 Let $(z,w) \in \Lambda$, let $n \in \N$, and let $M_{n}(z,w)$ be as in Definition~\ref{DefM(z,w)}.
 \begin{enumerate}[(a)]
 \item If
 \begin{equation*}
     T^{n}(x,y) \in R\left((z,w), b^{-\lambda(n)}, b^{-\xi(n)}\right),
 \end{equation*}
then $(x,y) \in M_n(z,w)$.
 \item If $(x,y) \in  M_{n}(z,w)$, then
 \begin{equation*}
     T^{n}(x,y) \in R\left((z,w), b^{-\lambda(n)+2}, b^{-\xi(n)+2}\right).
 \end{equation*}
\end{enumerate}
 \end{lemma}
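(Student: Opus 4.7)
The plan is to encode the condition $T^n(x,y) \in R((z,w), b^{-\lambda(n)}, b^{-\xi(n)})$ as a bound on a sum of digit differences, and then classify which digit-difference sequences are compatible with such a bound. Using the preceding lemma $T \circ \Pi = \Pi \circ \sigma$ iterated $n$ times, if $(x_i, y_i)_{i \in \N} \in \bar\Sigma$ is the unique representative of $(x,y)$, then $T^n(x,y) = \bigl(\sum_{i=1}^\infty x_{i+n} b^{-i}, \sum_{i=1}^\infty y_{i+n} b^{-i}\bigr)$. Since $z = \sum_{i=1}^\infty z_i b^{-i}$, setting $a_i := x_{i+n} - z_i \in \{-(b-1), \ldots, b-1\}$ reduces the horizontal inclusion $|T^n(x,y)_1 - z| \leq b^{-\lambda(n)}$ to the digit-difference bound $\bigl|\sum_{i=1}^\infty a_i b^{-i}\bigr| \leq b^{-\lambda(n)}$. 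The vertical coordinate is treated identically, with $y_{i+n} - w_i$ in place of $a_i$ and $\xi(n)$ in place of $\lambda(n)$.

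For part (a), analyse the sequence $(a_i)$. If $a_i = 0$ for every $1 \leq i < \lambda(n)$, case (i) of Definition~\ref{DefM(z,w)} holds. Otherwise let $j$ be the smallest index with $a_j \neq 0$. Combining $|a_j b^{-j}| \geq b^{-j}$ with the tail estimate $\bigl|\sum_{i>j} a_i b^{-i}\bigr| \leq (b-1)\sum_{i>j} b^{-i} = b^{-j}$ forces $j < \lambda(n)$ and $|a_j| = 1$, since $|a_j| \geq 2$ would yield $\bigl|\sum a_i b^{-i}\bigr| \geq b^{-j} > b^{-\lambda(n)}$. Consider $a_j = +1$; the case $a_j = -1$ is symmetric. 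The identity $\sum_{i>j}(b-1)b^{-i} = b^{-j}$ rewrites $\sum_{i \geq 1} a_i b^{-i} = \sum_{i>j}\bigl(a_i + (b-1)\bigr) b^{-i}$, a sum of non-negative terms. If any $a_{i_0} + (b-1) \geq 1$ for some $j < i_0 < \lambda(n)$, this sum would be at least $b^{-(\lambda(n)-1)} > b^{-\lambda(n)}$, a contradiction; hence $a_i = -(b-1)$ for every $j < i < \lambda(n)$, realising case (ii)(b). The mirror argument for $a_j = -1$ yields case (ii)(a). Running the same analysis on the vertical coordinate gives condition (2), whence $(x,y) \in M_n(z,w)$.

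For part (b), assume $(x,y) \in M_n(z,w)$. In case (i), $a_i = 0$ for $i < \lambda(n)$, so $\bigl|\sum a_i b^{-i}\bigr| = \bigl|\sum_{i \geq \lambda(n)} a_i b^{-i}\bigr| \leq b^{-\lambda(n)+1}$. In case (ii), a direct geometric-series computation gives $\sum_{i=j}^{\lambda(n)-1} a_i b^{-i} = \pm b^{-\lambda(n)+1}$, while the tail from index $\lambda(n)$ onwards contributes at most $b^{-\lambda(n)+1}$ in absolute value; combining yields $\bigl|\sum a_i b^{-i}\bigr| \leq 2\, b^{-\lambda(n)+1} \leq b^{-\lambda(n)+2}$ using $b \geq 2$. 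The identical estimate with $\xi(n)$ for the vertical coordinate places $T^n(x,y)$ inside $R((z,w), b^{-\lambda(n)+2}, b^{-\xi(n)+2})$.

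The main subtlety is the classification in part (a): recognising that once a nonzero digit-difference appears at some $j < \lambda(n)$, the only way the total sum can remain at or below $b^{-\lambda(n)}$ is via the ``maximal carry'' cancellation patterns captured by (ii)(a) and (ii)(b). The asymmetry between the bound $b^{-\lambda(n)}$ in (a) and the enlarged bound $b^{-\lambda(n)+2}$ in (b) is precisely the slack required to absorb one digit of carry and cannot be tightened in general.
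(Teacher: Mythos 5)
Your proposal is correct and follows essentially the same line of argument as the paper's proof: reduce to the digit-difference sum $\sum_i (x_{i+n}-z_i)b^{-i}$, use the triangle/tail estimate to force $|a_j|=1$ at the first deviation, and then observe that any non-maximal later digit before position $\lambda(n)$ would push the sum past $b^{-\lambda(n)}$. The paper organizes part (a) as a proof by contradiction over two explicit cases, whereas you give a direct classification via the non-negative reformulation $\sum a_i b^{-i}=\sum_{i>j}(a_i+b-1)b^{-i}$ when $a_j=+1$ (and its mirror), but the underlying estimates and the geometric-series computation in part (b) are identical.
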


 \begin{proof} \,
 
\noindent $(a)$  Writing $\|\cdot\|$ to denote the usual supremum norm on $\R^{2}$, observe that
 \begin{equation*}
     \left\|T^{n} \circ \Pi\left((x_{i},y_{i})_{i \in \N}\right)-(z,w)\right\|=\left\|\left(\sum_{i=1}^{\infty}(x_{i+n}-z_{i})b^{-i}, \sum_{i=1}^{\infty}(y_{i+n}-w_{i})b^{-i} \right) \right\|.
 \end{equation*}
Thus, if 
 \begin{equation*}
     T^{n}(x,y) \in R\left((z,w), b^{-\lambda(n)}, b^{-\xi(n)} \right),
 \end{equation*}
 then
 \begin{equation*}
     \left|\sum_{i=1}^{\infty}(x_{i+n}-z_{i})b^{-i}\right| \leq b^{-\lambda(n)} \quad \text{and} \quad \left| \sum_{i=1}^{\infty}(y_{i+n}-w_{i})b^{-i} \right| \leq b^{-\xi(n)}.
 \end{equation*}
 Consider the first coordinate axis and suppose that $(x_{i+n})_{i=1 \dots , \lambda(n)}$ does not satisfy the conditions of Definition~\ref{DefM(z,w)}. We consider the following two cases:
\begin{enumerate}[(i)] 
  \item Suppose that for some $1 \leq j<\lambda(n)$ we have $x_{i+n}=z_{i}$ for $1 \leq i \leq j-1$, but $x_{j+n}-z_{j}\geq2$. Then, {by our hypothesis combined with the reverse triangle inequality, we have}
 \begin{align*}
     \left|\sum_{i=1}^{\infty}(x_{i+n}-z_{i})b^{-i}\right| &= \left| (x_{j+n}-z_{j})b^{-j}+ \sum_{i=j+1}^{\infty}(x_{i+n}-z_{i})b^{-i}\right| \\[1ex] 
     &{\geq \left|(x_{j+n}-z_j)b^{-j}\right| - \sum_{i=j+1}^{\infty}{\left|x_{i+n}-z_i\right|b^{-i}}} \\[1ex] 
     	&\geq 2b^{-j}-\sum_{i=j+1}^{\infty}{(b-1)b^{-i}} \\[1ex]
        &= 2b^{-j}-b^{-j} \\[1ex]
        &>b^{-\lambda(n)}
 \end{align*}
and hence $T^{n}(x,y) \not \in R\left((z,w), b^{-\lambda(n)}, b^{-\xi(n)} \right)$. {A similar argument yields the same contradiction} if $x_{i+n}=z_i$ for $1 \leq i \leq j-1$ and $x_{j+n}-z_{j}\leq-2$. 
 \item Suppose that for some $1 \leq j < \lambda(n)$ we have $x_{i+n}=z_{i}$ for $1 \leq i \leq j-1$ and $x_{j+n}-z_{j}=1$, but $z_{t}-x_{t+n}\leq b-2$ for some $j+1 \leq t < \lambda(n)$. Then,
 \begin{align*} 
 \sum_{i=1}^{\infty}(x_{i+n}-z_{i})b^{-i} &= (x_{j+n}-z_{j})b^{-j}-\sum_{i=j+1}^{\infty}(z_{i}-x_{i+n})b^{-i} \\[1ex]
 &= b^{-j}-\sum_{i=j+1}^{\infty}{(z_i-x_{i+n})b^{-i}}.
 \end{align*}
 Now observe that 
 \begin{align*}
     \sum_{i=j+1}^{\infty}{(z_i-x_{i+n})b^{-i}} &\leq -b^{-t}+\sum_{i=j+1}^{\infty}(b-1)b^{-i} \\[1ex] 
     &=-b^{-t}+b^{-j}\, .
 \end{align*}
 Hence,
 \begin{align*}
\sum_{i=1}^{\infty}(x_{i+n}-z_{i})b^{-i}&\geq b^{-t}>b^{-\lambda(n)}\,
 \end{align*} 
and so $T^{n}(x,y) \not \in R\left((z,w), b^{-\lambda(n)}, b^{-\xi(n)} \right)$. The same {contradiction can be obtained via a similar argument} in the case that $x_{i+n}=z_i$ for $1 \leq i \leq j-1$, $x_{j+n}-z_{j}=-1$, and \mbox{$x_{t+n}-z_{t} \leq b-2$}.
\end{enumerate}
Analogous arguments can be applied in the case that $(y_{i+n})_{i=1,\dots,\xi(n)}$ does not satisfy the conditions of Definition \ref{DefM(z,w)}. This concludes the proof of part $(a)$ of the lemma.

\vspace{1ex}
\noindent $(b)$ Again, we start with the observation that
 \begin{equation*}
     \left\|T^{n} \circ \Pi\left((x_{i},y_{i})_{i \in \N}\right)-(z,w)\right\|=\left\|\left(\sum_{i=1}^{\infty}(x_{i+n}-z_{i})b^{-i}, \sum_{i=1}^{\infty}(y_{i+n}-w_{i})b^{-i} \right) \right\|.
 \end{equation*}
Let $T^{n}(x,y)_{1}$ denote the value of $T^{n}(x,y)$ in the horizontal axis and let $T^{n}(x,y)_{2}$ denote the value of $T^{n}(x,y)$ in the vertical axis. If $x_{i+n}=z_{i}$ for all $i=1, \dots , \lambda(n)$, then
 \begin{equation*}
|T^n(x,y)_1-z| \leq \sum_{i=\lambda(n)+1}^{\infty}(b-1)b^{-i}=b^{-\lambda(n)}
 \end{equation*}
 and so $T^{n}(x,y)_{1} \in [z-b^{-\lambda(n)},z+b^{-\lambda(n)}]$. Similarly, if $y_{i+n}=w_{i}$ for all $i=1, \dots , \xi(n)$, then
 \begin{equation*}
|T^n(x,y)_2-w| \leq \sum_{i=\xi(n)+1}^{\infty}(b-1)b^{-i}=b^{-\xi(n)}
 \end{equation*}
 and so $T^{n}(x,y)_{2} \in [w-b^{-\xi(n)},w+b^{-\xi(n)}]$. 
 
Next, we consider the case when, for some $1 \leq j < \lambda(n)$, we have $x_{i+n}=z_{i}$ for $1\leq i\leq j-1$, $x_{j+n}-z_{j}=-1$, and $x_{i+n}-z_{i}=b-1$ for all $j+1 \leq i < \lambda(n)$. In this case, we have 
 \begin{align*}
 \left|T^{n}(x,y)_{1}-z\right| &{= \left|\sum_{i=1}^{\infty}{x_{i+n}b^{-i}} - \sum_{i=1}^{\infty}{z_ib^{-i}}\right|} \\
 &{= \left| \sum_{i=j}^{\infty}{(x_{i+n}-z_i)b^{-i}} \right|} \\
 &= \left|-b^{-j}+\sum_{i=j+1}^{\lambda(n)-1}(b-1)b^{-i} +\sum_{k=\lambda(n)}^{\infty}(x_{k+n}-z_{k})b^{-k}\right| \\
 &=\left|-b^{-j} +b^{-j}-b^{-(\lambda(n)-1)} + \sum_{k=\lambda(n)}^{\infty}(x_{k+n}-z_{k})b^{-k}\right| \\
 & \leq \left|b^{-\lambda(n)+1}\right| + \sum_{k=\lambda(n)}^{\infty}|(x_{k+n}-z_k)|b^{-k} \\
 & \leq \left|b^{-\lambda(n)+1}\right| + \sum_{k=\lambda(n)}^{\infty}(b-1)b^{-k} \\
 & = 2 b^{-\lambda(n)+1} \\
 &\leq b^{-\lambda(n)+2}.
 \end{align*}
 It can be deduced similarly that if $x_{i+n}=z_i$ for all $1 \leq i \leq j-1$, $x_{j+n}-z_{j}=1$, and \mbox{$z_{i}-x_{i+n}=b-1$} for all $j+1 \leq i < \lambda(n)$, then
 \[|T^{n}(x,y)_{1}-z|\leq b^{-\lambda(n)+2}.\]
 Thus, in either case, 
 \[T^{n}(x,y)_{1} \in [z-b^{-\lambda(n)+2}, z+b^{-\lambda(n)+2}].\]  
 Similar calculations show that if $(x,y) \in M_n(z,w)$, then $T^{n}(x,y)_{2} \in [w-b^{-\xi(n)+2},w+b^{-\xi(n)+2}]$. Hence,
 \begin{equation*}
  T^{n}(x,y) \in R\left( (z,w), b^{-\lambda(n)+2}, b^{-\xi(n)+2} \right),
  \end{equation*}
  as required.
  \end{proof}

\subsection{Proof of Proposition~\ref{relationship between sets}}

{The proof of Proposition~\ref{relationship between sets} follows immediately from the following lemma.}
\begin{lemma} \label{Lemma 1}
Fix $(z,w) \in \Lambda$ and suppose $\lambda(n) { \to \infty}$ {as $n \to \infty$ (hence, by assumption, we also have $\xi(n) \to \infty$ as $n \to \infty$)}. For all sufficiently large $n \in \N$, and any $(x,y) \in \Lambda$, we have the following:
\begin{enumerate}[(i)]
\item \label{Lemma 1 i} Suppose $z \not \in \{0,1\}$ and $w \not \in \{0,1\}$. Then
\begin{align} 
T^{n}(x,y) \in [z-b^{-\lambda(n)},z+b^{-\lambda(n)}&]\times [w-b^{-\xi(n)}, w+b^{-\xi(n)}] \label{eq1} 
\end{align}
if and only if for some $\bt \in J^{n}$ we have
\begin{align} 
\left|x-f_{\bt}(z,w)_1\right|\leq &b^{-\lambda(|\bt|)-|\bt|} \quad \text{and} \quad \left|y-f_{\bt}(z,w)_2\right|\leq b^{-\xi(|\bt|)-|\bt|}. \label{eq2}
\end{align}
\item \label{Lemma 1 ii} Suppose $z \in \{0,1\}$ or $w \in \{0,1\}$. If \eqref{eq1} is satisfied,
then \eqref{eq2} holds for some $\bt \in J^{n}$.
Conversely, if \eqref{eq2} holds for some $\bt \in J^{n}$, then
\begin{equation}
T^{n}(x,y) \in \bigcup_{(r,s)\in\{-1,0,1\}^2} [z+r-b^{-\lambda(n)},z+r+b^{-\lambda(n)}]\times [w+s-b^{-\xi(n)}, w+s+b^{-\xi(n)}]. \label{eq6}
\end{equation}
\end{enumerate}
\end{lemma}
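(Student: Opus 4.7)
The core idea is to exploit the identity $f_{\bt}\circ T^{n}(x,y) = (x,y)$, valid whenever $\bt\in J^{n}$ is the length-$n$ prefix of the representative sequence of $(x,y)\in\Lambda$; this follows by iterating the preceding lemma $T\circ\Pi=\Pi\circ\sigma$. Writing $\bt = ((u_{i},v_{i}))_{i=1}^{n}$, $U=\sum_{i=1}^{n}u_{i}b^{n-i}$ and $V=\sum_{i=1}^{n}v_{i}b^{n-i}$, the map $f_{\bt}$ is a similarity of contraction ratio $b^{-n}$ sending $(z,w)$ to $\left((U+z)/b^{n},(V+w)/b^{n}\right)$; thus \eqref{eq2} is nothing but the $b^{-n}$-scaled form of \eqref{eq1} once the length-$n$ prefix of $(x,y)$ equals $\bt$.

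For the \emph{forward direction} in both parts, assume \eqref{eq1} and take $\bt$ to be the length-$n$ prefix of the representative of $(x,y)$. Since $f_{\bt}$ contracts by $b^{-n}$ and $f_{\bt}(T^{n}(x,y))=(x,y)$, applying $f_{\bt}$ to the rectangular inclusion \eqref{eq1} scales the sidelengths by $b^{-n}$ and yields \eqref{eq2} directly, establishing the $\Lambda_{\lambda,\xi}\subseteq W_{\lambda,\xi}$ inclusion in both parts of Proposition~\ref{relationship between sets}.

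For the \emph{reverse direction}, assume \eqref{eq2} for some $\bt\in J^{n}$. Then $x$ lies within $b^{-n-\lambda(n)}$ of $(U+z)/b^{n}$; since $\lambda(n)\to\infty$, for $n$ large this interval has length below $b^{-n}$, so $x$ lies in one of the three cubes $[(U+r)/b^{n},(U+r+1)/b^{n}]$ with $r\in\{-1,0,1\}$, and analogously $y$ lies in the cube shifted by some $s\in\{-1,0,1\}$. The length-$n$ prefix of the representative of $(x,y)$ then corresponds to the integer $U+r$ horizontally (and $V+s$ vertically), so $T^{n}(x,y)_{1}=b^{n}x-(U+r)$; rearranging \eqref{eq2} places $T^{n}(x,y)_{1}$ within $b^{-\lambda(n)}$ of $z-r$, and an analogous computation places $T^{n}(x,y)_{2}$ within $b^{-\xi(n)}$ of $w-s$. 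Reindexing $(r,s)\mapsto(-r,-s)$ converts this into \eqref{eq6}. In part (i), the hypothesis $z,w\notin\{0,1\}$ ensures that $(U+z)/b^{n}$ sits strictly inside cube $U$ with clearance at least $\min(z,1-z)/b^{n}$, which eventually exceeds $b^{-n-\lambda(n)}$; hence $r=s=0$ is forced and \eqref{eq6} collapses to \eqref{eq1}.

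The \emph{main obstacle} is the boundary case in part (ii): when $z$ or $w$ lies in $\{0,1\}$, the point $f_{\bt}(z,w)$ sits on the boundary between adjacent $b^{-n}$-cubes and the neighborhood in \eqref{eq2} can spill into neighbouring cubes. Crossing such a boundary changes the length-$n$ prefix of the representative and shifts $T^{n}(x,y)$ by an integer in $\{-1,0,1\}$ per coordinate; this is precisely the origin of the union over $\{-1,0,1\}^{2}$ in \eqref{eq6}, several of the nine shifts yielding rectangles centred outside $[0,1]^{2}$ and therefore being vacuous.
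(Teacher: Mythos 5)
Your proof is correct and proves exactly what the lemma claims, but the route differs from the paper's in presentation. For the forward direction you exploit the inverse-branch identity $f_{\bt}\circ T^{n}=\mathrm{Id}$ (with $\bt$ the length-$n$ prefix of the representative) together with the fact that $f_{\bt}$ is a similarity of ratio $b^{-n}$, so that applying $f_{\bt}$ to \eqref{eq1} immediately yields \eqref{eq2}; the paper instead expands $|x-f_{\bt}(z,w)_1|$ as a difference of base-$b$ series and factors out $b^{-n}$ by hand. The computation is the same, but your framing is cleaner and makes the geometric content transparent. For the reverse direction you argue via $b^{-n}$-cubes: since $x$ lies within $b^{-n-\lambda(n)}$ of $(U+z)/b^{n}$, for large $n$ this forces the integer prefix of $x$ to be $U+r$ with $r\in\{-1,0,1\}$, whence $T^{n}(x,y)_1=b^{n}x-(U+r)$ is within $b^{-\lambda(n)}$ of $z-r$. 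The paper performs the equivalent step symbolically, isolating the integer $\sum_{i\leq n}(x_{i}-u_{i})b^{n-i}$ via the reverse triangle inequality and deducing it lies in $\{-1,0,1\}$; this integer is precisely your $P-U$. One small imprecision in your version: "which cube $x$ lies in" does not by itself determine the representative's length-$n$ prefix when $x$ sits on a cube boundary (the choice is governed by the tie-breaking rule defining $\bar{\Sigma}$), so the statement should really be "there exists $r\in\{-1,0,1\}$ such that the representative prefix equals $U+r$"; this follows from \eqref{eq2} exactly as you argue, and the conclusion is unaffected. The paper's sum-based phrasing avoids this subtlety automatically since it works with the chosen representative throughout. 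Both approaches buy essentially the same thing, with your version trading a little rigor at boundaries for a more conceptual and geometric exposition.
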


\begin{proof}
Let $(x,y) \in \Lambda$ and take $(x_i,y_i)_{i \in \N}$ to be the unique representative sequence of $(x,y)$ from $\bar{\Sigma}$. The proof of the implication $\eqref{eq1} \implies \eqref{eq2}$ is the same regardless of the values of $z$ and~$w$. The reverse implications $\eqref{eq2} \implies \eqref{eq1}$ and $\eqref{eq2} \implies \eqref{eq6}$ are similar in proof, but {differ slightly} depending on the unique representative sequence of $(x,y)$.

 Observe that \eqref{eq1} holds if and only if
\begin{equation} \label{prop equiv}
\left| \left( \sum_{i=1}^{\infty}x_{i+n}b^{-i} \right) -z \right| \leq b^{-\lambda(n)} \, \quad \text{and} \quad \, \left| \left( \sum_{i=1}^{\infty}y_{i+n}b^{-i} \right) -w \right| \leq b^{-\xi(n)},
\end{equation}
since
\begin{equation*}
T^{n}(x,y)=\left( \sum_{i=1}^{\infty}x_{i+n}b^{-i}, \sum_{i=1}^{\infty}y_{i+n}b^{-i} \right)\, .
\end{equation*}

Consider $\bt=((x_{1},y_{1}), \dots , (x_{n},y_{n})) \in J^{n}$. Then
\begin{equation*}
f_{\bt}(z,w)=\left( \left(\sum_{i=1}^{n}x_{i}b^{-i}\right)+b^{-n}z, \left(\sum_{i=1}^{n}y_{i}b^{-i}\right)+b^{-n}w \right)\,.
\end{equation*}
Note that we have 
\begin{align*}
|x-f_{\bt}(z,w)_1| &= \left|\left(\sum_{i=1}^{\infty}{x_i b^{-i}}\right)-\left(\left(\sum_{i=1}^{n}{x_{i}b^{-i}}\right)+b^{-n}z\right)\right| \\
                   &= \left|\left(\sum_{i=n+1}^{\infty}{x_{i} b^{-i}}\right)-b^{-n}z\right| \\
                   &= \left|b^{-n}\left(\sum_{i=1}^{\infty}{x_{i+n} b^{-i}}\right)-b^{-n}z\right| \\
                   &\leq b^{-n-\lambda(n)} \\
                   &= b^{-|\bt|-\lambda(|\bt|)}
\end{align*}
where the penultimate line above follows from the left-hand inequality from \eqref{prop equiv}. We can do a similar calculation in the second coordinate axis, yielding
\[|y-f_{\bt}(z,w)_2| \leq b^{-|\bt|-\xi(|\bt|)}.\] 
Hence, we have shown that  $\eqref{eq1} \implies \eqref{eq2}$ regardless of the values of $z$ and $w$. 

We now turn our attention to showing that $\eqref{eq2}$ implies $\eqref{eq1}$ when $z,w\not\in\{0,1\}$ and $\eqref{eq2}$ implies $\eqref{eq6}$ in general. We begin with showing that $\eqref{eq2}$ implies $\eqref{eq6}$. Suppose that \mbox{$(x,y)\in \Lambda$} satisfies \eqref{eq2} for some $\bt=((u_{1},v_{1}),\dots , (u_{n},v_{n})) \in J^{n}$. Let us consider for a moment the left-hand inequality from \eqref{eq2}. We have 
\begin{align}
\left| x-f_{\bt}(z,w)_{1}\right| \leq b^{-\lambda(|\bt|)-|\bt|} \quad &\implies \quad \left| \sum_{i=1}^{\infty}x_{i}b^{-i} - \left( \sum_{j=1}^{n}u_{j}b^{-{j}} + b^{-n}z \right) \right| \leq b^{-\lambda(n)-n} \nonumber\\
&\implies \quad \left| \sum_{i=1}^{n}(x_{i}-u_{i})b^{-i} +  \sum_{j=1}^{\infty}x_{n+j}b^{-(n+j)}-b^{-n}z  \right| \leq b^{-\lambda(n)-n} \nonumber\\
&\implies \quad  \left| \sum_{i=1}^{n}(x_{i}-u_{i})b^{n-i} + \sum_{j=1}^{\infty}x_{n+j}b^{-j} -z  \right| \leq b^{-\lambda(n)} \nonumber\\
&\implies \quad  \left| \sum_{i=1}^{n}(x_{i}-u_{i})b^{n-i} + \left(T^{n}(x,y)_{1}-z\right) \right| \leq b^{-\lambda(n)}. \label{eq7}
\end{align}
Note that by the reverse triangle inequality, the above yields
\begin{align}
    \left| x-f_{\bt}(z,w)_{1}\right| \leq b^{-\lambda(|\bt|)-|\bt|} &\implies \quad  \left| \sum_{i=1}^{n}(x_{i}-u_{i})b^{n-i}\right| - \left|\left(T^{n}(x,y)_{1}-z\right) \right| \leq b^{-\lambda(n)}. \label{reverse t}
\end{align}
If the absolute value of the summation appearing in {\eqref{reverse t} is} greater than or equal to $2$, {then~\eqref{reverse t}} would be false since $|T^{n}(x,y)_{1}-z|\leq1${, $b \geq 2$,} and $\lambda(n)\geq1$. So we must have that
\begin{equation} \label{summation}
\sum_{i=1}^{n}(x_{i}-u_{i})b^{n-i} \in \{-1,0,1\}.
\end{equation}
If $r=-\sum_{i=1}^{n}(x_{i}-u_{i})b^{n-i}$, then it follows from \eqref{eq7} that 
\begin{equation} \label{what we are left with}
    T^n(x,y)_1\in[z+r-b^{-\lambda(n)},z+r+b^{-\lambda(n)}],
\end{equation}
{where $r \in \{-1,0,1\}$.}
Similarly for the second coordinate axis we can show that 
$$T^n(x,y)_2\in [z+s-b^{-\lambda(n)},z+s+b^{-\lambda(n)}]$$
for some $s\in\{-1,0,1\}$. Thus we have {shown that} $\eqref{eq2}$ implies $\eqref{eq6}$.

Now we will show that $\eqref{eq2}$ implies $\eqref{eq1}$. If $r=\sum_{i=1}^{n}(x_{i}-u_{i})b^{n-i}=0$, then by \eqref{what we are left with} we are done. Thus it remains to show that $r \not \in \{-1,1\}$. In order to reach a contradiction, suppose \mbox{$r\in\{-1,1\}$}.  By the assumption appearing in Lemma~\ref{Lemma 1} $(i)$, we have that $z\not\in\{0,1\}$ and \mbox{$w\not\in\{0,1\}$}. In particular, there exists $k\in\N$ such that
\begin{equation}\label{z place}
z\in (b^{-k},1-b^{-k}).
\end{equation}
Since $\lambda(n) \to \infty$ as $n\to \infty$, there exists some $n_{0} \in \N$ such that $\lambda(n)>k$ for all $n>n_{0}$. So suppose without loss of generality that we have been working with some $n>n_{0}$. For \eqref{what we are left with} to be true we must have that 
\begin{equation*}
T^{n}(x,y)_{1}-z \in [-1-b^{-\lambda(n)},-1+b^{-\lambda(n)}] \cup [1-b^{-\lambda(n)},1+b^{-\lambda(n)}].
\end{equation*}
However, since $T^{n}(x,y)_{1} { \in [0,1]}$ {and} $z \in [0,1]$, this would imply that 
\[z \in[0,b^{-\lambda(n)}]\cup [1- b^{-\lambda(n)},1]\] 
which is false by \eqref{z place}.  Hence $r \not \in \{-1,1\}$ and so we are done. A similar calculation can be done in the second coordinate axis, thus showing that $\eqref{eq2}$ implies~$\eqref{eq1}$.
\end{proof}

 \section{Proof of Theorem \ref{main}} \label{proof}
 \subsection{Upper bound} \label{upperbound}

We begin with a proof of the upper bound of Theorem~\ref{main}. 

Let $(j(n))_{n\in\N}$ be any sequence where $\lambda(n) \leq j(n) \leq \xi(n)$ for all $n\in\N$. For each $(x,y)\in\Lambda$ let
\begin{equation*}
B_{n,j(n)}(x,y)=\left[\sum_{i=1}^{n+j(n)} x_ib^{-i},\sum_{i=1}^{n+j(n)} x_ib^{-i}+b^{-(n+j(n))}\right]\times\left[\sum_{i=1}^{n+j(n)} y_ib^{-i},\sum_{i=1}^{n+j(n)}y_ib^{-i}+b^{-(n+j(n))}\right].
\end{equation*}
We then set
\begin{equation*}
C_{n,j(n)}=\left\{B_{n,j(n)}(x,y): (x,y) \in M_{n}(z,w) \right\}.
\end{equation*}
Note  that $C_{n,j(n)}$ is a finite collection of squares with sides $b^{-(n+j(n))}$. Furthermore, by Lemma~\ref{upper}(a), we have
\begin{align*}
T^{-n}\left(R\left((z,w), b^{-\lambda(n)}, b^{-\xi(n)}\right)\right)\, &\subseteq M_n(z,w) \subseteq \bigcup_{B_{i} \in C_{n,j(n)}} B_{i}\, . 
\end{align*}
Thus, for any $N \in \N$  we have 
\begin{equation*}
     \bigcup_{n \geq N} \bigcup_{B_{i} \in C_{n,j(n)}}B_{i} \supseteq \Lambda_{\lambda,\xi}(z,w).
\end{equation*}
Hence, 
\begin{equation*}
\{B_i:B_i\in C_{n,j(n)}\text{ for some } n\geq N\}
\end{equation*}
forms a $b^{-N}$-cover of $\Lambda_{\lambda, \xi}(z,w)$. 

Observe that
 \begin{equation*}
     \#C_{n,j(n)} \leq 9(\#J)^{n} \max_{(x,y) \in M_{n}(z,w)} \left\{\prod_{i=\lambda(n)}^{j(n)}(\#J_{2}(y_{i+n})) \right\}.
\end{equation*}
 To see this, note that from the definition of $M_n(z,w)$ that there are no restrictions on the first $n$ digits of $(x_{i},y_{i})_{i \in \N}$, hence there are $(\#J)^{n}$ possibilities. Then, recalling our intuitive description of the set $M_n(z,w)$, we are considering words with images under projection sitting within $9$ specified rectangles with sidelengths $b^{-n-\lambda(n)}$ and $b^{-n-\xi(n)}$. Now, within each of these rectangles {we want to} count the number of non-empty cubes of sidelength $b^{-(n+j(n))}$ that intersect $\Lambda$. This cardinality is built up by considering the number of cubes of sidelength $b^{-n-\lambda{(n)}}$ that intersect $\Lambda$ (which is precisely $\#J_{2}(y_{n+\lambda{(n)}})$ from some rectangle with centre $(x,y)\in M_{n}(z,w)$). Then, within each of these cubes consider the number of cubes of sidelength $b^{-(n+1)-\lambda{(n)}}$ with non-empty intersection with $\Lambda$ (this is precisely $\#J_{2}(y_{n+\lambda{(n)}})\times \#J_{2}(y_{n+1+\lambda{(n)}})$). Continue this iteratively up to $j(n)-\lambda(n)$, and then take the maximum over all $(x,y)\in M_{n}(z,w)$.

For each $a \in \{0, \dots , b-1\}$, $\lambda(n) \leq j \leq \xi(n)$, and $y$ such that $(x,y) \in M_{n}(z,w)$ {for some $x$}, recall that
 \begin{equation*}
    p_{n,j}(a, y)=\#\{\lambda(n) \leq i \leq j : y_{i+n}=a \}
 \end{equation*}
 and
 \begin{equation*}
   A_{n,j}(z,w)=\max_{(x,y) \in M_{n}(z,w)} \left\{ \sum_{a \in \{0, \dots , b-1\}} p_{n,j}(a, y) \log\#J_{2}(a) \right\}.   
 \end{equation*}
Moreover, note that {we may rewrite}
\begin{equation*}
 A_{n,j}(z,w)=\log \left( \max_{(x,y) \in M_{n}(z,w)} \left\{\prod_{i=\lambda(n)}^{j}(\#J_{2}(y_{i+n})) \right\} \right)\, .
 \end{equation*}
Also, recall that we define
\begin{equation*}
    s_{n}=\min\left\{ \frac{n\log\#J + A_{n,j}(z,w)}{(n+j)\log b} \, : \, \lambda(n) \leq j \leq \xi(n) \right\}\, . 
\end{equation*}
For each $n \in \N$, choose $\lambda(n) \leq j(n) \leq \xi(n)$ such that the minimum value $s_{n}$ is attained; that is, $j(n)$ is chosen such that
 \begin{equation*}
     s_{n}=\frac{n\log\#J+A_{n,j(n)}(z,w)}{(n+j(n))\log b}.
 \end{equation*}
Next, let $s=\limsup_{n \to \infty} s_{n}$ {and let $\varepsilon > 0$. Choose $N_{\varepsilon} \in \N$ such that for any $n \geq N_{\varepsilon}$ we have $s_n \leq s+\frac{\varepsilon}{2}$. Let 
\[s^+_{\varepsilon} = \max\left(\{s_n : 1 \leq n \leq N_{\varepsilon}\} \cup \{s\}\right) \qquad \text{and} \qquad j_{\varepsilon}=\max\left\{\frac{j(n)}{n}: 1 \leq n \leq N_{\varepsilon}\right\}.\]}

Then, {by our choice of $s$,} we have

 \begin{align*}
     \cH_{b^{-N}}^{s{+\varepsilon}}(\Lambda_{\lambda,\xi}(z,w)) &\leq \sum_{n \geq N} \sum_{B_{i} \in C_{n,j(n)}} b^{-(n+j(n))(s+\varepsilon)} \\
     & \leq \sum_{n\geq N} \# C_{n,j(n)} b^{-(n+j(n))(s+\varepsilon)}\\
     & \leq 9\sum_{n\geq N} (\#J)^{n} \max_{(x,y) \in M_{n}(z,w)} \left\{\prod_{i=\lambda(n)}^{j(n)}(\#J_{2}(y_{i+n})) \right\}  b^{-(n+j(n))(s+\varepsilon)}\\
     &= 9\sum_{n \geq N} b^{\frac{n\log \#J}{\log b}} b^{\frac{A_{n,j(n)}(z,w)}{\log b}} b^{-(n+j(n))(s+\varepsilon)} \\
     &{= 9\sum_{n \geq N} b^{(n+j(n))\left(\frac{n\log \#J + A_{n,j(n)}(z,w)}{(n+j(n))\log b}\right)} b^{-(n+j(n))(s+\varepsilon)}} \\
     &{= 9\sum_{n \geq N} b^{(n+j(n))s_n} b^{-(n+j(n))(s+\varepsilon)}} \\
     &{= 9 \sum_{n=N}^{N_{\varepsilon}}{b^{(n+j(n))(s_n-s-\varepsilon)}} + 9 \sum_{n=N_{\varepsilon}+1}^{\infty}{b^{(n+j(n))(s_n - s - \varepsilon)}}} \\
     &{\leq 9 \sum_{n=N}^{N_{\varepsilon}}{b^{(n+j(n))(s^{+}_{\varepsilon}-s-\varepsilon)}} + 9 \sum_{n=N_{\varepsilon}+1}^{\infty}{b^{(n+j(n))\left(s+\frac{\varepsilon}{2}-s-\varepsilon \right)}}} \\
     &{\leq 9 \sum_{n=N}^{N_{\varepsilon}}{b^{n(1+j_{\varepsilon})(s^{+}_{\varepsilon}-s)}} + 9 \sum_{n=N_{\varepsilon}+1}^{\infty}{b^{-n\frac{\varepsilon}{2}}}} \\
     &{\leq 9 \sum_{n=1}^{N_{\varepsilon}}{b^{n(1+j_{\varepsilon})(s^{+}_{\varepsilon}-s)}} + 9 \sum_{n=N_{\varepsilon}+1}^{\infty}{b^{-n\frac{\varepsilon}{2}}}}\, .
 \end{align*}
 Since this bound is independent of $N$ it follows by letting $N\to \infty$ that \mbox{$\cH^{s+\varepsilon}(\Lambda_{\lambda,\xi}(z,w))<\infty$} and so by the definition of Hausdorff dimension it follows that \mbox{$\dimh \Lambda_{\lambda,\xi}(z,w) \leq s$}. {This completes the proof of} the upper bound {of Theorem \ref{main}}.
 
 \subsection{Lower bound} \label{lowerbound}
  
 \subsubsection{Outline of the strategy}
 For establishing the lower bound in Theorem \ref{main} we use the mass distribution principle (Lemma~\ref{Mass distribution principle}). Firstly we translate the problem of considering points in $\Lambda_{\lambda,\xi}(z,w)$ to the symbolic space. We then construct a suitable measure on the symbolic space such that the projection of the support of the measure is a Cantor set contained in a subset of $\Lambda_{\lambda,\xi}(z,w)$. We then use the mass distribution principle to calculate a lower bound for the Hausdorff dimension of $\Lambda_{\lambda,\xi}(z,w)$ by computing the H\"{o}lder exponent of the measure at an arbitrary point in the Cantor set.
  
 \subsubsection{Construction of a measure}
 Let $(n_{k})_{k \in \N}$ be a strictly increasing sequence of integers such that:
 \begin{enumerate}[(i)]
     \item There exists a fixed $\Delta \in (1, \infty)$ \label{Delta def} such that for all $k \geq 1$ we have
 \begin{equation*}
     n_{k+1}> \Delta\left(\sum_{i=1}^{k}(\xi(n_{i})+2)\right) \quad \text{and} \quad n_{k+1}>n_{k}+\xi(n_{k})+2.
 \end{equation*}
 \item {For some $\varepsilon>0$ we have
 \begin{equation*} 
 s_{n_{k}} \geq \limsup_{n \to \infty} s_{n} - \varepsilon 
 \end{equation*}
for all $k \in \N$.}
 \end{enumerate}
For each $k \in \N$, define 
 \begin{equation*}
     \Lambda_{\lambda,\xi}((z,w),n_{k})=\left\{ (x,y) \in \Lambda : T^{n_{k}}(x,y) \in R\left((z,w),b^{-\lambda(n_k)+2}, b^{-\xi(n_{k})+2} \right) \right\}.
 \end{equation*}
As above, choose $\lambda(n_{k})\leq j(n_{k}) \leq \xi(n_{k})-1$ such that $j(n_k)$ satisfies
 \begin{equation*}
 \frac{n_{k}\log\#J+A_{n_{k},j(n_k)}(z,w)}{(n_{k}+j(n_k))\log b}=\min\left\{ \frac{n_{k}\log\#J+A_{n_{k},j}(z,w)}{(n_{k}+j)\log b} \, : \, \lambda(n_{k}) \leq j \leq \xi(n_{k})-1 \right\}\, .
 \end{equation*}
Furthermore, for each $k \in \N$, pick some word 
\[(\hat{x}(k),\hat{y}(k))=\Pi((\hat{x}_{i}(k), \hat{y}_{i}(k))_{i \in \N}) \in M_{n_{k}}(z,w)\] 
such that
\begin{equation*}
\prod_{i=\lambda(n_{k})}^{j(n_k)} (\#J_{2}(\hat{y}_{i+n_{k}}(k)))= \max_{(x,y) \in M_{n_{k}}(z,w)} \left\{ \prod_{i=\lambda(n_{k})}^{j(n_k)} (\#J_{2}(y_{i+n_{k}}(k))) \right\}.
\end{equation*}

Next, we define a measure $\mu$ on the cylinders 
\[\{[(u_{i},v_{i})_{i=1,\dots, n}]: n\in \N \text{ and } (u,v) \in \bar{\Sigma}\}.\] 
To begin with, for $1 \leq m \leq n_1$, we assign cylinders corresponding to words of length $m$ mass $\frac{1}{(\#J)^m}$; that is, 
\[\mu([(u_i, v_i)_{i=1,\dots,m}])= \frac{1}{(\#J)^m}.\]
In general, for a cylinder $[(u_i,v_i)_{i=1,\dots,m+1}]$ corresponding to a word of length $m+1 \geq n_1+1$, we assign it a portion of $\mu([(u_i,v_i)_{i=1,\dots,m}])$ as follows:
\begin{itemize}
\item{If $n_{k}+\xi(n_{k})+2< m+1 \leq n_{k+1}$ for any $k \in \N$, then distribute the mass evenly over the digits so that
\[\mu([(u_i,v_i)_{i=1,\dots,m+1}]) = \frac{\mu([(u_i,v_i)_{i=1,\dots,m}])}{\#J}.\]}
\item{If $n_{k} < m+1 \leq n_{k}+ \lambda(n_{k})+2$ for any $k \in \N$, then assign all the mass at each digit to the digit matching $(\hat{x}_{m+1}(k),\hat{y}_{m+1}(k))_{k\in\N}$ so that
\begin{equation*}
    \mu([(u_i,v_i)_{i=1,\dots,m+1}]) =
    \begin{cases}
      \mu([(u_i,v_i)_{i=1,\dots,m}]) &\textrm{if } (u_{m+1},v_{m+1})=(\hat{x}_{m+1}(k),\hat{y}_{m+1}(k)) \;, \\[1ex]
      0 &\textrm{otherwise}.
    \end{cases}
\end{equation*}}
\item{If $n_{k}+\lambda(n_{k})+2< m+1 \leq n_{k}+\xi(n_{k})+2$ for any $k \in \N$, then distribute the mass at each digit evenly over the digits in the set $J_{2}(\hat{y}_{m+1}(k))$ so that
\begin{equation*}
    \mu([(u_i,v_i)_{i=1,\dots,m+1}]) =
    \begin{cases}
      \frac{\mu([(u_i,v_i)_{i=1,\dots,m}])}{\#J_{2}(\hat{y}_{m+1}(k))} &\textrm{if } (u_{m+1},v_{m+1})\in J_{2}(\hat{y}_{m+1}(k)) \;, \\[1ex]
      0 &\textrm{otherwise}.
    \end{cases}
\end{equation*}}
\end{itemize}

Essentially, the measure $\mu$ distributes the mass evenly over the first $n_{1}$ digits in the construction set for $\Lambda$, then all mass is concentrated to the digits that match up with the word whose cylinder projects (by $\Pi$) to a ball containing the most populated rectangle formed in the definition of $M_{n}(z,w)$. When the sidelength of this projected cylinder is slightly smaller than the longest sidelength of the rectangle the measure $\mu$ then begins to distribute the mass over digits whose words have cylinders that project to a populated ball in said rectangle. This continues until the projected cylinder's sidelength is slightly smaller than the smallest sidelength of the rectangle. The cycle then begins again, i.e. the mass is evenly distributed over the digits in the construction set of $\Lambda$ until we reach the $n_{2}$th digit etc.  \par 
 Define $\nu=\mu \circ \Pi^{-1}$. It follows from the construction of the measure $\mu$ that
 \begin{equation*}
     \nu\left( \bigcap_{k\in \N}\Lambda_{\lambda,\xi}((z,w), n_{k}) \right)=1.
 \end{equation*}
 
 \subsubsection{Measure of a general ball}
 Fix $(x,y) \in \bigcap_{k \in \N}\Lambda_{\lambda,\xi}((z,w),n_{k})$ and let $r>0$. We will consider the measure $\nu(B((x,y),r))$ of the ball $B((x,y),r)$. To this end, suppose that $n \in \N$ is the unique integer such that
 \begin{equation*}
     b^{-n-1}<r \leq b^{-n}
 \end{equation*}
and notice that $B((x,y),r)$ can intersect at most $9$ projections (in terms of the projection mapping $\Pi$) of $n$-level cylinders. Thus 
 \begin{equation*}
     \nu(B((x,y),r)) \leq 9 \mu([(x_{i},y_{i})_{i=1, \dots , n}]).
 \end{equation*}
Let $k \in \N$ be the unique value of $k$ such that $n_{k}<n\leq n_{k+1}$. For convenience, let us also define
 \begin{equation*}
     V(n) = \prod_{a \in \{0, \dots , b-1\}}(\#J_{2}(a))^{-p^{*}_{n_{k},n-n_{k}}(a,\hat{y}(k))},
 \end{equation*}
 where
 \begin{equation*}
 p^{*}_{n_{k},n-n_{k}}(a,\hat{y}(k))=\#\{\lambda(n_k) +3 \leq i \leq \min\{n-n_k,\xi(n_k)+2\} \, : \, \hat{y}_{i+n_k}(k)=a\}.
 \end{equation*}

By the construction of the measure $\mu$ and choice of $(x,y)$ with unique representative sequence \mbox{$(x_{i},y_{i})_{i\in\N} \in \bar{\Sigma}$}, we have the following non-zero possibilities for the measure of a cylinder $\mu([(u_{i},v_{i})_{i=1, \dots , n}])$ which may be intersected by $B((x,y),r)$:
\begin{itemize}
\item{If $n_{k}< n \leq n_{k}+\lambda(n_{k})+2$, then 
\begin{equation*}
     \mu([(u_{i},v_{i})_{i=1, \dots , n}]) =(\#J)^{-\left( n_{k}-\sum_{i=1}^{k-1}(\xi(n_{i})+2) \right)}\prod_{l=1}^{k-1}\prod_{i=\lambda(n_{l})+3}^{\xi(n_{l})+2}(\#J_{2}(\hat{y}_{i+n_{l}}(l)))^{-1}=:U(n_{k}).
\end{equation*}}
\item{If $n_{k}+\lambda(n_{k})+2<n \leq n_{k}+\xi(n_{k})+2$, then  
\begin{equation*}
     \mu([(u_{i},v_{i})_{i=1, \dots , n}])= U(n_{k})\prod_{i=\lambda(n_{k})+3}^{n-n_{k}}(\#J_{2}(\hat{y}_{i+n_{k}}(k)))^{-1} = U(n_{k})V(n).
\end{equation*}}
\item{If $n_{k}+\xi(n_{k})+2<n \leq n_{k+1}$, then
\begin{equation*}
     \mu([(u_{i},v_{i})_{i=1, \dots , n}])=U(n_{k})V(n_{k}+\xi(n_{k})+2)(\#J)^{-(n-(n_{k}+\xi(n_{k})+2))}.
\end{equation*}}
\end{itemize}

Notice that \label{T bound}
\begin{align*}
U(n_{k}) &= (\#J)^{-\left( n_{k}-\sum_{i=1}^{k-1}(\xi(n_{i})+2) \right)}\prod_{l=1}^{k-1}\prod_{i=\lambda(n_{l})+3}^{\xi(n_{l})+2}\left(\#J_{2}(\hat{y}_{i+n_{l}}(l))\right)^{-1} \\[1ex] 
     &\leq (\#J)^{-(n_{k}-\sum_{i=1}^{k-1}(\xi(n_{i})+2))} \\[1ex]
     &\leq (\#J)^{-n_{k}(1-\Delta^{-1})} \\[1ex]
     &=b^{-n_{k}(1-\Delta^{-1})\frac{\log\#J}{\log b}},
\end{align*} 
where $\Delta$ is as on page \pageref{Delta def}.

Next, recall that
 \begin{equation*}
     p_{n,j}(a,\hat{y}(k))=\#\{\lambda(n) \leq i \leq j \, : \, \hat{y}_{i+n}(k)=a \}\, , 
 \end{equation*}
and let
\[C=\left(\max_{a \in \{0, \dots , b-1\}}\#J_{2}(a)\right)^{3}.\] 
Counting the number of times each digit $a\in\{0,\dots,b-1\}$ appears in the string of digits $\left(\hat{y}_{n_{k}+i}(k)\right)_{i=\lambda(n_{k}),\ldots, n-n_{k}}$ we get 
\[\prod_{a \in \{0, \dots , b-1\}}(\#J_{2}(a))^{-p_{n_{k},n-n_{k}}(a,\hat{y}(k))}.\] This is essentially what $V(n)$ does except for that $V(n)$ starts counting digits at $n_{k}+\lambda(n_{k})+3$. Thus,
 \begin{equation*}
 C^{-1}\prod_{a \in \{0, \dots , b-1\}}(\#J_{2}(a))^{-p_{n_{k},n-n_{k}}(a,\hat{y}(k))} \leq  V(n) \leq C \prod_{a \in \{0, \dots , b-1\}}(\#J_{2}(a))^{-p_{n_{k},n-n_{k}}(a,\hat{y}(k))}\,.
 \end{equation*}
 
Putting all of the above together, and recalling that $b^{-n-1} < r \leq b^{-n}$ and $n_{k} < n \leq n_{k+1}$, we have the following cases:
\begin{itemize}
\item If $n_{k}<n \leq n_{k}+\lambda(n_{k})+2$, then
\begin{align*}
\frac{\log \nu(B((x,y),r))}{\log r} &\geq \frac{-\log U(n_{k})}{n\log b} \\[1ex] 
                                    &\geq  \frac{n_{k}(1-\Delta^{-1})\log\#J}{n\log b} \\[1ex]
                                    &\geq \frac{n_{k}(1-\Delta^{-1})\log\#J}{(n_k + \lambda(n_k) + 2)\log b}.
\end{align*}
\item If $n_{k}+\lambda(n_{k})+2<n \leq n_{k}+\xi(n_{k})+2$, then 
\begin{align*}
\frac{\log \nu(B((x,y),r))}{\log r} &\geq \frac{-\log U(n_{k}) - \log V(n)}{n\log b} \\ 
&\geq  \frac{n_{k}(1-\Delta^{-1})\log\#J + A_{n_{k},n-n_k}(z,w) -\log C}{n\log b}.
\end{align*}
\item If $n_{k}+\xi(n_{k})+2<n \leq n_{k+1}$, then 
\begin{align*}
&\frac{\log \nu(B((x,y),r))}{\log r} \\
&\phantom{=====}\geq \frac{-\log U(n_{k}) - \log V(n_{k}+\xi(n_{k})+2) - \log(\#J)^{-(n-(n_{k}+\xi(n_{k})+2))}}{n\log b}\\[1ex]
&\phantom{=====} \geq \frac{(n_{k}(1-\Delta^{-1})+n-(n_{k}+\xi(n_{k})+2) )\log\#J + A_{n_{k}, \xi(n_{k}) +2}(z,w) -\log C}{ n \log b} \\[1ex]
&\phantom{=====} \geq \frac{(n_{k}(1-\Delta^{-1})+(n_k + \xi(n_k) + 2)-(n_{k}+\xi(n_{k})+2) )\log\#J + A_{n_{k}, \xi(n_{k}) +2}(z,w) -\log C}{ (n_k + \xi(n_k) + 2) \log b} \\[1ex]
&\phantom{=====} \geq \frac{(n_{k}(1-\Delta^{-1}))\log\#J + A_{n_{k}, \xi(n_{k}) +2}(z,w) -\log C}{ (n_k + \xi(n_k) + 2) \log b}.
\end{align*}
\end{itemize} 
Thus, noting that the endpoints of the second case correspond to the lower bounds for the first and third of the above cases and recalling that $\Delta > 1$, we have that 
 \begin{align*}
\frac{\log \nu(B((x,y),r))}{\log r} &\geq \min \left\{  \frac{n_{k}(1-\Delta^{-1})\log\#J + A_{n_{k},j}(z,w)-\log C}{(n_{k}+j)\log b} \quad : \, \lambda(n_{k})+2<j \leq \xi(n_{k})+2 \right\} \\[1ex]
                                    &\geq \min \left\{  \frac{n_{k}(1-\Delta^{-1})\log\#J + (1-\Delta^{-1})A_{n_{k},j}(z,w)-\log C}{(n_{k}+j)\log b} \quad : \, \lambda(n_{k})+2<j \leq \xi(n_{k})+2 \right\}.
 \end{align*}
 Since $\lim_{k\to\infty}\frac{-\log C}{n_{k}\log b}= 0$ and $\lim_{k\to\infty} s_{n_k}=\limsup_{n\to\infty} s_n$,  
 we have for all sufficiently large $k\in\N$ (which corresponds to all sufficiently small $r>0$) that
 \begin{align*}
     \frac{\log \nu(B((x,y),r))}{\log r} &\geq \min \left\{  \frac{(1-\Delta^{-1})\left(n_{k}\log\#J + A_{n_{k},j}(z,w)\right)}{(n_{k}+j)\log b} \quad : \, \lambda(n_{k})+2<j \leq \xi(n_{k})+2 \right\}- \varepsilon \\
     &= (1-\Delta^{-1})s_{n_{k}}-\varepsilon \\
     & \geq (1-\Delta^{-1})\limsup_{n\to \infty} s_{n} -2\varepsilon.
 \end{align*}
 Since this holds for all $\varepsilon> 0$ and all $\Delta\in (1,\infty)$, by the mass distribution principle (Lemma~\ref{Mass distribution principle}) we have that
 \begin{equation*}
     \dimh \Lambda_{\lambda,\xi}(z,w) \geq \limsup_{n \to \infty} s_{n}. 
 \end{equation*}

  \section{Proof of Theorem~\ref{corollary}} \label{proof of corollary}
Throughout this section we take the assumptions of Theorem \ref{corollary}. That is:
\begin{enumerate}[(i)]
\item{the limits 
\[\lambda:=\lim_{n \to \infty} \frac{\lambda(n)}{n} \quad \text{and} \quad \xi:=\lim_{n \to \infty} \frac{\xi(n)}{n}\] 
exist with $\xi>\lambda$, and} 
\item{we have a fixed 
\[(z,w)=\left(\sum_{i=1}^{\infty} z_ib^{-i},\sum_{i=1}^{\infty} w_ib^{-i}\right) \in \Lambda\] 
where for each $0\leq a\leq b-1$ the limits
$$p_a(z,w)=\lim_{n\to\infty}\frac{\#\{1\leq i\leq n:w_i=a\}}{n}$$
exist with $p_0(z,w)<1$ and $p_{b-1}(z,w)<1$.}
\end{enumerate}

Recall, in Section 2.2, for $(z,w)\in \Lambda$ and $n\in\N$ we defined $k_{w}(n)$ to be the minimum number of terms for which $(y_{i+n})_{i\in\N}$ and $(w_{i})_{i\in\N}$ must agree in order to have $(x,y)\in M_{n}(z,w)$ for some $x$. That is, $k_{w}(n)$ is the unique integer, with $1 \leq k_{w}(n) < \xi(n)$, such that all words $(x_{i},y_{i})_{i \in \N} \in M_{n}(z,w)$ will be of the form
 \begin{equation*}
 (x_{i+n},y_{i+n})_{i=1, \dots, \xi(n)}=(x_{i+n},w_{i})_{i=1, \dots, k_{w}(n)}(x_{j+n},y_{j+n})_{j=k_{w}(n)+1, \dots , \xi(n)}\, .
 \end{equation*}
 Furthermore, it follows from Definition~\ref{DefM(z,w)} that if $k_{w}(n)<\xi(n)-1$ then
 \begin{equation*}
     (w_{i})_{i=k_{w}(n)+2,\ldots,\xi(n)-1}=(0)_{i=k_{w}(n)+2,\ldots, \xi(n)-1} \, ,\quad \text{ or } \quad (w_{i})_{i=k_{w}(n)+2,\ldots,\xi(n)-1}=(b-1)_{i=k_{w}(n)+2,\ldots, \xi(n)-1}\, .
 \end{equation*}
 For fixed $(z,w)\in \Lambda$ consider the sequence $(k_{w}(n))_{n\in\N}$. Since $p_{0}(z,w)<1$ and $p_{b-1}(z,w)<1$ we can deduce the following:

\begin{lemma} \label{freq}
We have that 
\begin{equation*}
    \lim_{n\to\infty}\frac{k_{w}(n)}{n}=\xi\, .
\end{equation*}
\end{lemma}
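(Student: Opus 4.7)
The plan is to read off $k_{w}(n)$ combinatorially from Definition~\ref{DefM(z,w)} in terms of the trailing runs of the digit sequence $(w_i)$, and then to exploit the hypothesis $p_0(z,w),p_{b-1}(z,w)<1$ to show these runs have length negligible compared to $n$.

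I would first unpack condition~(2) of Definition~\ref{DefM(z,w)}. In case~(2)(ii)(a), the requirement $y_{i+n}-w_i=b-1$ for $j+1\leq i<\xi(n)$, combined with $y_{i+n}\in\{0,\dots,b-1\}$, forces $w_{j+1}=\cdots=w_{\xi(n)-1}=0$ (and $w_j\geq 1$); dually, case~(2)(ii)(b) forces $w_{j+1}=\cdots=w_{\xi(n)-1}=b-1$ (and $w_j\leq b-2$). Writing $\tau_c(N)$ for the length of the longest suffix of $(w_1,\dots,w_N)$ consisting entirely of the digit $c$, the smallest branching index $j^{\ast}(n)$ admitted by either case is therefore
\[
j^{\ast}(n)=\xi(n)-1-\max\bigl\{\tau_0(\xi(n)-1),\,\tau_{b-1}(\xi(n)-1)\bigr\},
\]
so that $k_{w}(n)=j^{\ast}(n)-1=\xi(n)-2-\max\{\tau_0(\xi(n)-1),\tau_{b-1}(\xi(n)-1)\}$ for all $n$ large enough that neither tail is of full length (which holds eventually since $p_0(z,w),p_{b-1}(z,w)<1$ rule out $(w_1,\dots,w_{\xi(n)-1})$ being constantly $0$ or constantly $b-1$).

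The core of the lemma then reduces to proving $\tau_0(N)/N\to 0$ as $N\to\infty$ (the statement for $\tau_{b-1}$ is symmetric). I would argue by contradiction: if $\tau_0(N_k)/N_k\to\delta>0$ along some subsequence, set $M_k=N_k-\tau_0(N_k)$, so that $M_k/N_k\to 1-\delta$ and $w_i=0$ for $M_k<i\leq N_k$. Writing $f(N):=\#\{1\leq i\leq N:w_i=0\}$, this yields $f(N_k)=f(M_k)+\tau_0(N_k)$. Dividing by $N_k$ and invoking the assumed existence of the limit $p_0(z,w)$ on both the scale $N_k$ and (a fortiori) the scale $M_k$ gives $p_0(z,w)=(1-\delta)\,p_0(z,w)+\delta$, whence $\delta\bigl(1-p_0(z,w)\bigr)=0$; since $p_0(z,w)<1$, this forces $\delta=0$, a contradiction.

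Combining the two previous steps with $\xi(n)/n\to\xi$ gives
\[
\frac{k_{w}(n)}{n}=\frac{\xi(n)-2}{n}-\frac{\xi(n)-1}{n}\cdot\frac{\max\{\tau_0(\xi(n)-1),\tau_{b-1}(\xi(n)-1)\}}{\xi(n)-1}\longrightarrow \xi-\xi\cdot 0=\xi,
\]
which is the claim. The main obstacle is the bookkeeping needed to extract the clean formula for $k_w(n)$ from the layered conditions in Definition~\ref{DefM(z,w)}; once this identification is in hand, the tail-density argument is short and self-contained.
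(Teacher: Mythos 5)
Your proof is correct and takes essentially the same approach as the paper: both arguments rest on the observation that the digits $w_{k_w(n)+2},\dots,w_{\xi(n)-1}$ form a trailing run of $0$'s or $(b-1)$'s, and both use the existence of the limits $p_0(z,w)<1$, $p_{b-1}(z,w)<1$ to show that such a run must have length $o(n)$. Your version makes the combinatorial identity $k_w(n)=\xi(n)-2-\max\{\tau_0(\xi(n)-1),\tau_{b-1}(\xi(n)-1)\}$ explicit and argues the density step by contradiction, while the paper manipulates the same counting identity directly with $o(n)$ bookkeeping; these are stylistic rather than substantive differences.
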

\begin{proof}
Suppose $k_{w}(n)<\xi(n)-3$, otherwise the statement is immediate, and suppose $(w_{i})_{i=1,\ldots, \xi(n)-1}$ ends in a string of zeros. We have that for each $n\in\N$
\begin{equation}\label{case0}
\#\{1\leq i\leq\xi(n)-1:w_i=0\}=\#\{1\leq i\leq k_{w}(n)+1:w_i=0\}+(\xi(n)-1)-k_{w}(n)-1.
\end{equation}
Thus since
$$\#\{1\leq i\leq\xi(n)-1:w_i=0\}=p_0(z,w)(\xi(n)-1)+\mathit{o}(n)= p_0(z,w)\xi(n)+\mathit{o}(n),$$
we have that
\begin{equation*}
\xi(n)(1-p_0(z,w))+\mathit{o}(n)=k_{w}(n)+2-\#\{1\leq i\leq k_{w}(n)+1:w_i=0\}\, ,
\end{equation*}
and so
\begin{equation*}
\xi(n)(1-p_0(z,w))+\mathit{o}(n)=k_{w}(n)-\#\{1\leq i\leq k_{w}(n)+1:w_i=0\}\, .
\end{equation*}
Since we also have that
\begin{equation*}
\#\{1\leq i\leq k_{w}(n)+1:w_i=0\}=p_0(z,w)k_{w}(n)+\mathit{o}(n),
\end{equation*}
we see that 
\[\frac{k_{w}(n)}{n} = \frac{\xi(n)}{n} + o(1).\]
Thus, taking the limit as $n$ tends to infinity we see that
\[\lim_{n \to \infty}{\frac{k_w(n)}{n}} = \xi\]
as claimed. The proof works similarly in the case that $(w_{i})_{i=1,\ldots, \xi(n)-1}$ ends in a string of $(b-1)$s.
\end{proof}

We will also use the following estimate:

\begin{lemma}\label{shiftedfrequency}
For all $a \in \{0,1,\dots, b-1\}$ we have that   
$$\#\{\lambda(n) \leq i \leq j : w_{i}=a \}=(j-\lambda(n))p_a(z,w)+\mathit{o}(n)$$ 
where $\lambda(n)\leq j\leq \xi(n)$.
\end{lemma}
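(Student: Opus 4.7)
The plan is to reduce the count over the window $[\lambda(n),j]$ to a difference of two initial-segment counts and then apply the frequency hypothesis from Theorem~\ref{corollary}, being careful that the error is $o(n)$ uniformly in $j$ across the whole range $\lambda(n)\le j\le\xi(n)$.

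\textbf{Step 1 (telescoping the count).} Write
\[
\#\{\lambda(n)\le i\le j:w_i=a\}=\#\{1\le i\le j:w_i=a\}-\#\{1\le i\le \lambda(n)-1:w_i=a\}.
\]
Both terms on the right are of the form $\#\{1\le i\le m:w_i=a\}$, which is exactly the quantity controlled by the hypothesis that $p_a(z,w)$ exists.

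\textbf{Step 2 (quantitative form of the frequency hypothesis).} By the assumption in Theorem~\ref{corollary} that $\lim_{m\to\infty}m^{-1}\#\{1\le i\le m:w_i=a\}=p_a(z,w)$, for every $\varepsilon>0$ there exists $M=M(\varepsilon)$ such that
\[
\bigl|\#\{1\le i\le m:w_i=a\}-m\,p_a(z,w)\bigr|<\varepsilon m \qquad \text{for all } m\ge M.
\]

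\textbf{Step 3 (uniformity over the window).} Recall from the hypotheses of Theorem~\ref{corollary} that $\lambda(n)\to\infty$ and $\xi(n)/n\to\xi$, so in particular $\xi(n)\le (\xi+1)n$ for all sufficiently large $n$. Pick $n$ large enough that $\lambda(n)-1\ge M$; then for every $j$ with $\lambda(n)\le j\le\xi(n)$ both $j$ and $\lambda(n)-1$ exceed $M$, and Step~2 yields
\begin{align*}
\#\{1\le i\le j:w_i=a\}&= j\,p_a(z,w)+\eta_1(n,j),\qquad |\eta_1(n,j)|<\varepsilon j\le \varepsilon(\xi+1)n,\\
\#\{1\le i\le\lambda(n)-1:w_i=a\}&=(\lambda(n)-1)\,p_a(z,w)+\eta_2(n),\qquad |\eta_2(n)|<\varepsilon(\xi+1)n.
\end{align*}

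\textbf{Step 4 (combining).} Subtracting and absorbing the harmless constant $p_a(z,w)$ from the off-by-one shift $(\lambda(n)-1)\mapsto \lambda(n)$ into the error gives
\[
\#\{\lambda(n)\le i\le j:w_i=a\}=(j-\lambda(n))p_a(z,w)+E(n,j),\qquad |E(n,j)|\le 2\varepsilon(\xi+1)n+p_a(z,w).
\]
Since $\varepsilon>0$ is arbitrary, $E(n,j)=o(n)$ as $n\to\infty$, uniformly in $j\in[\lambda(n),\xi(n)]$, which is exactly the claim.

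There is no real obstacle here; the only thing requiring a moment's care is the uniformity of the $o(n)$ error in $j$, which is why I work with the quantitative $\varepsilon$-form of the limit rather than with the bare limit statement, and exploit $\xi(n)=O(n)$ to turn $o(j)$ into $o(n)$ across the whole window.
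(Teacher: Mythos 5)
Your proof is correct and follows essentially the same route as the paper: decompose the window count as a difference of two initial-segment counts, apply the existence of $p_a(z,w)$, and absorb the resulting error into $o(n)$ using $\lambda(n),\xi(n)=O(n)$. Your version is a little more careful than the paper's (you spell out the $\varepsilon$--$M$ form of the limit and explicitly check that the error is uniform over $j\in[\lambda(n),\xi(n)]$, whereas the paper compresses this into a single $o(\cdot)$ bookkeeping step), but there is no substantive difference of approach.
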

\begin{proof}
Let $a \in \{0,1,\dots,b-1\}$ and note that 
\[\#\{1\leq i\leq n:w_i=a\}=np_a(z,w)+\mathit{o}(n).\] Thus
\begin{eqnarray*}
\#\{\lambda(n) \leq i \leq j : w_{i}=a \}&=&\#\{1\leq i\leq j:w_i=a\}-\#\{1\leq i \leq \lambda(n)-1:w_i=a\}\\
&=&(j-\lambda(n)+1)p_a(z,w)+\mathit{o}(j-\lambda(n))
\\
&=&(j-\lambda(n))p_a(z,w)+\mathit{o}(n)\, ,
\end{eqnarray*}
where we have used that $\lambda(n)\leq j \leq \xi(n)$, $\lambda(n)=\lambda n+\mathit{o}(n)$ and $\xi(n)=\xi n+\mathit{o}(n)$ to make the error dependent exclusively on $n$.
\end{proof}
Recall that for $n \in \N$ and $j \geq \lambda(n)$, we define
\begin{equation*}
    A_{n,j}(z,w)= \max_{(x,y) \in M_{n}(z,w)} \left\{ \sum_{a \in \{0, \dots , b-1\}} p_{n,j}(a, y) \log\#J_{2}(a) \right\}.
\end{equation*} 
Combining the previous two results we see that:
\begin{lemma}\label{keybound}
For $n\in\N$ and $\lambda(n)\leq j\leq\xi(n)$ we have that
$$A_{n,j}(z,w)=\left((j-\lambda(n))\sum_{a\in \{0,\ldots, b-1\}} p_a(z,w)\log\#J_2(a)\right)+\mathit{o}(n)$$
\end{lemma}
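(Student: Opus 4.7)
\medskip

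\noindent\textbf{Proof plan.} The plan is to use the structural description of $M_n(z,w)$ recorded via $k_{w}(n)$ together with the two preceding lemmas. For any $(x,y)\in M_n(z,w)$, the coordinates $y_{i+n}$ for $1\le i\le k_{w}(n)$ must coincide with $w_i$, so the only freedom in choosing $y_{i+n}$ for $\lambda(n)\le i\le j$ occurs on the (short) tail interval $k_{w}(n)<i\le \xi(n)$. This should reduce the maximisation in the definition of $A_{n,j}(z,w)$ to a forced contribution (determined by the digits of $w$) plus an $o(n)$ error.

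First I would split into two cases according to whether $j\le k_{w}(n)$ or $j>k_{w}(n)$. In the first case the sum
$$\sum_{a\in\{0,\dots,b-1\}} p_{n,j}(a,y)\log\#J_2(a)= \sum_{a\in\{0,\dots,b-1\}} \#\{\lambda(n)\le i\le j:w_i=a\}\log\#J_2(a)$$
is independent of the choice of $(x,y)\in M_n(z,w)$, and Lemma~\ref{shiftedfrequency} immediately converts it into
$(j-\lambda(n))\sum_{a} p_a(z,w)\log\#J_2(a)+o(n)$.

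In the second case I would further split the index range $\lambda(n)\le i\le j$ into a \emph{forced} part $\lambda(n)\le i\le k_{w}(n)$ and a \emph{free} part $k_{w}(n)<i\le j$. The forced part again gives $(k_{w}(n)-\lambda(n))\sum_{a}p_a(z,w)\log\#J_2(a)+o(n)$ by Lemma~\ref{shiftedfrequency}. The free part, even when maximised, is bounded above by $(j-k_{w}(n))\log\max_{a}\#J_2(a)$; since $j\le\xi(n)$ and Lemma~\ref{freq} gives $k_{w}(n)=\xi n+o(n)=\xi(n)+o(n)$, the free contribution is $o(n)$. Replacing $k_{w}(n)-\lambda(n)$ by $j-\lambda(n)$ in the forced term introduces an error of size $|j-k_{w}(n)|\cdot\sum_a p_a(z,w)\log\#J_2(a)=o(n)$ by the same application of Lemma~\ref{freq}. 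Combining the two contributions yields the claimed formula.

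The only mildly delicate point is verifying that the $o(n)$ term is \emph{uniform} in $j\in[\lambda(n),\xi(n)]$; this is not really an obstacle, because both error estimates used (from Lemmas~\ref{freq} and~\ref{shiftedfrequency}) depend only on $n$ and not on $j$, and the case distinction at $j=k_{w}(n)$ only introduces a constant factor. Apart from this bookkeeping, the argument is a direct assembly of Lemmas~\ref{freq} and \ref{shiftedfrequency} with the forced-digit structure built into the definition of $M_n(z,w)$.
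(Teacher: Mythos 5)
Your proposal is correct and follows essentially the same route as the paper's proof: split the index range at $k_w(n)$, treat the forced part with Lemma~\ref{shiftedfrequency}, bound the free part by $\bigl(\max_a\log\#J_2(a)\bigr)\cdot(\xi(n)-k_w(n))=o(n)$ via Lemma~\ref{freq}, and absorb the discrepancy between $k_w(n)$ and $j$ into $o(n)$. The only cosmetic difference is that the paper writes the split once using $\min\{k_w(n),j\}$ rather than casing on $j\le k_w(n)$ versus $j>k_w(n)$.
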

\begin{proof}
Let $(z,w) \in \Lambda$ be fixed, let $n\in\N$, and let $k_{w}(n)$ be as above. For each $y$ where $(x,y) \in M_{n}(z,w)$ for some $x$ we have
\begin{equation*}
(y_{i+n})_{i=1, \dots , \xi(n)}=(w_{i})_{i=1, \dots , k_{w}(n)}(y_{i+n})_{i=k_w(n)+1, \dots , \xi(n)}.
\end{equation*}
 It follows that
\begin{align*}
A_{n,j}(z,w)& =\sum_{a \in \{0,\dots, b-1\}} \#\left\{\lambda(n) \leq i \leq \min\{k_{w}(n),j\} : w_{i}=a \right\}\log\#J_{2}(a) \, \\
& \quad \quad + \, \, \max_{(x,y) \in M_{n}(z,w)} \left\{ \sum_{a \in \{0, \dots , b-1\}} \#\left\{ \min\{k_{w}(n),j\} < i \leq j : y_{i+n}=a \right\}\log\#J_{2}(a) \right\} \, .
\end{align*}
We consider the two parts of this sum separately. Let us define \[D=\max_{a \in \{0,1,\dots,b-1\}}\log \#J_2(a).\] 
By Lemma \ref{freq} we have that $k_{w}(n) = n \xi + o(n)$. Since we also have $j\leq \xi(n)$, it follows that
$$
\max_{(x,y) \in M_{n}(z,w)} \left\{ \sum_{a \in \{0, \dots , b-1\}} \#\{ \min\{k_{w}(n),j\} < i \leq j : y_{i+n}=a\}\log\#J_{2}(a)\right\}
\leq D(\xi(n)-k_{w}(n))=\mathit{o}(n).$$

On the other hand, again using Lemma~\ref{freq} and combining this with Lemma \ref{shiftedfrequency} we get that
\begin{align*}
&\sum_{a \in \{0,\dots, b-1\}} \#\left\{\lambda(n) \leq i \leq \min\{k_{w}(n),j)\} : w_{i}=a\right\}\log\#J_{2}(a)\\
&=\sum_{a \in \{0,\dots, b-1\}} \#\left\{\lambda(n) \leq i \leq j : w_{i}=a \right\}\log\#J_{2}(a)
-\sum_{a \in \{0,\dots, b-1\}}\#\left\{\min\{k_{w}(n),j\} < i\leq j : w_{i}=a\right\}\log\#J_{2}(a)\\
&=(j-\lambda(n))\sum_{a\in \{0,\ldots,b-1\}} p_a(z,w)\log\#J_2(a)+\mathit{o}(n)
\end{align*}
and the result follows.
\end{proof}

\begin{proof}[Proof of Theorem \ref{corollary}]
To prove Theorem~\ref{corollary}, let $\{s_{n}\}_{n \in \N}$ be the sequence of real numbers defined by Theorem~\ref{main} so
\begin{equation*}
\limsup_{n \to \infty} s_{n}=\dimh \Lambda_{\lambda,\xi}(z,w)\, .
\end{equation*}
For each $n \in \N$, let $u(n)$ be an integer with $\lambda(n)\leq u(n) \leq \xi(n)$ such that 
\begin{equation*}
s_n=\frac{n\log\#J+A_{n,u(n)}(z,w)}{(n+u(n))\log b}.
\end{equation*}
We now choose a subsequence $(n_i)_{i\in \N}$ such that
\[\lim_{i\to\infty}s_{n_i}=\limsup_{n\to\infty}s_n=\dimh \Lambda_{\lambda,\xi}(z,w)\] 
and for which
\[\lim_{i \to \infty} \frac{ u(n_{i}) }{ n_{i} }=u\in [\lambda,\xi].\] 
We can do this as $\lim_{n\to\infty}\tfrac{\lambda(n)}{n}=\lambda$ and $\lim_{n\to\infty}\tfrac{\xi(n)}{n}=\xi$ so $\left(\frac{u(n)}{n}\right)_{n\in\N}$ is a bounded sequence with limit points in $[\lambda,\xi]$. 
Applying Lemma \ref{keybound} gives that  
\begin{eqnarray*}
\limsup_{n\to\infty}s_n&=&\lim_{i\to\infty}s_{n_i}\\
&=&\lim_{i\to\infty}\frac{n_i\log\#J+( u(n_i)-\lambda(n_{i}))\sum_{a=0}^{b-1} p_a(z,w)\log\#J_2(a)+\mathit{o}(n_i)}{(n_i+u(n_i))\log b}\\
&=&\frac{1}{1+u}(\gamma+(u-\lambda)\gamma(z,w)_2).
\end{eqnarray*}
We have that for $u$ between $\lambda$ and $\xi$ the function 
\[u\mapsto\frac{1}{1+u}(\gamma+(u-\lambda)\gamma(z,w)_2)\] is monotonic and hence
$$\dimh\Lambda_{\lambda,\xi}(z,w)=\limsup_{n\to\infty}s_n\geq \min\left\{\frac{\gamma}{1+\lambda},\frac{1}{1+\xi}(\gamma+(\xi-\lambda)\gamma(z,w)_2)\right\}.$$

In the other direction we have
\begin{align*}
s_n&=\min\left\{\frac{n\log\#J+A_{n,j}(z,w)}{(n+j)\log b}:\lambda(n)\leq j\leq\xi(n)\right\} \\
   &\leq\min\left\{\frac{n\log\#J}{(n+\lambda(n))\log b},\frac{n\log\#J+A_{n,\xi(n)}(z,w)}{(n+\xi(n))\log b}\right\}.
\end{align*} 
Note that
$$\lim_{n\to\infty}\frac{n\log\#J}{(n+\lambda(n))\log b}=\frac{\gamma}{1+\lambda}.$$ 
Furthermore, by Lemma \ref{keybound}, we also have
$$\lim_{n\to\infty}\frac{n\log\#J+A_{n,\xi(n)}(z,w)}{(n+\xi(n))\log b}=\frac{1}{1+\xi}\left(\gamma+(\xi-\lambda)\gamma(z,w)_2\right)\, .$$
Thus,
$$\dimh\Lambda_{\lambda,\xi}(z,w)\leq \min\left\{\frac{\gamma}{1+\lambda},\frac{1}{1+\xi}\left(\gamma+(\xi-\lambda)\gamma(z,w)_2\right)\right\}$$
and the proof of Theorem \ref{corollary} is complete. 
\end{proof}

\section{Further remarks} \label{further remarks}
In this section we consider some examples illustrating why the assumptions in Theorem~\ref{corollary} are necessary and explain how they may be weakened in certain circumstances. 

\subsection{Digit frequencies: $p_{0}(z,w) <1$ and $p_{b-1}(z,w)<1$}

We start by looking at the assumption that $p_{0}(z,w)<1$ (the case where $p_{b-1}(z,w)<1$ is analogous). This assumption is used to show that words $(x,y)\in M_n(z,w)$ will satisfy that for $\lambda(n)<j\leq\xi(n)$,
\begin{equation*}
p_{n,j}(a,y)=p_{a}(z,w)(j-\lambda(n))+\mathit{o}(n) \quad \text{ for all $0\leq a\leq b-1$.}
\end{equation*}
So we will now look at the case where $p_{0}(z,w)=1$, but all the other assumptions of Theorem~\ref{corollary} are met, in particular \[\lim_{n\to\infty}\frac{\lambda(n)}{n}=\lambda<\xi = \lim_{n\to\infty}\frac{\xi(n)}{n}.\]

If $w=0$, then it will be the case that \mbox{$p_{0}(z,w)=1$}. Moreover, in this case, we have that $(x,y)\in M_{n}(z,w)$ implies $y_{i+n}=w_i=0$ for all $1\leq i<\xi(n)$. Thus, in this case, for $\lambda(n)\leq j \leq \xi(n)$ we will have that 
\[p_{n,j}(a,y)=p_{a}(z,w)(j-\lambda(n)+1)+o(n)\] for all $0\leq a\leq b-1$ (where the $o(n)$ term is coming from the possibility that we may have $y_{n+\xi(n)} \neq w_{\xi(n)}$). In turn, this means in this case that 
\[A_{n,j}(z,w)=(j-\lambda(n)+1)\log{\#J_2(0)} + o(n)\] and hence
\[s_n = \min\left\{\frac{n \log{\#J} + (j-\lambda(n)+1)\log{\#J_2(0)}}{(n+j)\log{b}}: \lambda(n) \leq j \leq \xi(n)\right\} + o(1),\]
where $s_n$ is as defined in Theorem \ref{main}.
Thus, when the other assumptions of Theorem~\ref{corollary} are satisfied, it follows from Theorem \ref{main} that
\begin{equation} \label{zeros case} 
\dimh\Lambda_{\lambda,\xi}(z,w)=\min\left\{\frac{\gamma}{1+\lambda},\frac{1}{1+\xi}\left(\gamma+(\xi-\lambda)\frac{\log \#J_2(0)}{\log b}\right)\right\}.
\end{equation}

Following an analogous argument, when $w = 1$ (i.e. $w_i = b-1$ for all $i \in \N$), we get
\begin{equation} \label{b-1 case} 
\dimh\Lambda_{\lambda,\xi}(z,w)=\min\left\{\frac{\gamma}{1+\lambda},\frac{1}{1+\xi}\left(\gamma+(\xi-\lambda)\frac{\log \#J_2(b-1)}{\log b}\right)\right\}.
\end{equation}

This means we have the following corollary to Theorem \ref{main} regarding typical points for an ergodic measure.

\begin{corollary}\label{corergodic}
Let $\mu$ be a $T$-invariant ergodic probability measure with $\mu(\Lambda)=1$. If we have \[\lambda=\lim_{n\to\infty}\frac{\lambda(n)}{n} \qquad \text{and} \qquad \xi=\lim_{n\to\infty}\frac{\xi(n)}{n}\] 
exist, then for $\mu$-almost all $(z,w)\in\Lambda$ we have that
\begin{equation*}
\dimh \Lambda_{\lambda,\xi}(z,w)=\min\left\{\frac{\gamma}{1+\lambda},\frac{1}{1+\xi}\left(\gamma+(\xi-\lambda)\gamma_2(\mu)\right)\right\} \, ,
\end{equation*} 
where
\[\gamma_2(\mu)=\frac{\sum_{a=0}^{b-1} p_a(\mu)\log \#J_2(a)}{\log b}\]
with $p_{a}(\mu)$ as defined in \eqref{frequency}.
\end{corollary}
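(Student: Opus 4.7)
The plan is to deduce Corollary~\ref{corergodic} from Theorem~\ref{corollary} together with the degenerate-case formulas \eqref{zeros case} and \eqref{b-1 case}, by using the Birkhoff ergodic theorem to verify the required digit-frequency hypotheses $\mu$-almost everywhere.

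First I would apply Birkhoff's ergodic theorem. Since $\Pi:\bar{\Sigma}\to\Lambda$ is injective and intertwines $T$ with the shift $\sigma$, each coordinate map $(z,w)\mapsto w_i$ is well-defined and measurable on $\Lambda$, so $f_a(z,w):=\mathbf{1}_{\{w_1=a\}}$ is a bounded measurable function for each $a\in\{0,\dots,b-1\}$. Applying Birkhoff to $f_a$ yields, for $\mu$-a.e.\ $(z,w)\in\Lambda$ and every $a\in\{0,\dots,b-1\}$,
\[p_a(z,w)=\lim_{n\to\infty}\frac{\#\{1\leq i\leq n:w_i=a\}}{n}=\int f_a\,d\mu=p_a(\mu).\]
Consequently, the digit-frequency hypotheses of Theorem~\ref{corollary} hold for $\mu$-a.e.\ $(z,w)$, and $\gamma(z,w)_2=\gamma_2(\mu)$ $\mu$-a.e.

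In the generic case where both $p_0(\mu)<1$ and $p_{b-1}(\mu)<1$, every hypothesis of Theorem~\ref{corollary} is satisfied for $\mu$-a.e.\ $(z,w)$, so the stated formula follows immediately.

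It then remains to treat the boundary cases $p_0(\mu)=1$ and $p_{b-1}(\mu)=1$; I would handle the first, the second being entirely symmetric. From $\mu(\{w_1=0\})=1$ and $T$-invariance one obtains $\mu(\{w_k=0\})=1$ for every $k\in\N$, and a countable intersection yields $\mu(\{w=0\})=1$, so $\mu$-a.e.\ point sits on the horizontal slice $w=0$. Formula \eqref{zeros case} then applies directly, and since $\gamma_2(\mu)=\log\#J_2(0)/\log b$ in this situation, it reduces to exactly the expression claimed in the corollary; the case $p_{b-1}(\mu)=1$ is handled identically using \eqref{b-1 case}. The main ``obstacle'' is really just this case split: the substantive dimension calculation is already contained in Theorem~\ref{corollary} together with the degenerate-case analysis preceding the corollary, and the present statement is an ergodic-theoretic repackaging of those results.
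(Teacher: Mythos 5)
Your overall strategy is the same as the paper's: use the Birkhoff ergodic theorem to show the digit frequencies $p_a(z,w)$ exist and equal $p_a(\mu)$ $\mu$-a.e., then invoke Theorem~\ref{corollary} in the generic case and the formulas \eqref{zeros case}/\eqref{b-1 case} when $p_0(\mu)=1$ or $p_{b-1}(\mu)=1$. The case split and the boundary-case handling are correct.

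However, there is a genuine gap at the Birkhoff step which the paper addresses explicitly and you pass over. You apply Birkhoff to $f_a(z,w)=\mathbf{1}_{\{w_1=a\}}$ and then equate $\frac{1}{n}\sum_{k=0}^{n-1}f_a(T^k(z,w))$ with $\frac{1}{n}\#\{1\le i\le n:w_i=a\}$. That identity requires $f_a(T^k(z,w))=\mathbf{1}_{\{w_{k+1}=a\}}$, where $(w_i)$ is the distinguished representative of $(z,w)$; this in turn needs the distinguished representative of $T^k(z,w)$ to be $\sigma^k$ applied to the distinguished representative of $(z,w)$, i.e.\ that $\bar\Sigma$ is $\sigma$-invariant, or equivalently that one never runs into the ambiguity of a non-unique base-$b$ expansion along the orbit. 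The intertwining lemma $T\circ\Pi=\Pi\circ\sigma$ on $\bar\Sigma$ does not by itself give this, since it says nothing about $\sigma(\omega)$ lying in $\bar\Sigma$. The same issue recurs in your boundary-case argument, where you pass from $\mu(\{w_1=0\})=1$ to $\mu(\{w_k=0\})=1$ via $T$-invariance: the inclusion $T^{-1}\{w_1=0\}=\{w_2=0\}$ again presupposes the shift-compatibility of the chosen representative. The paper resolves all of this at once by first proving that $\mu$-almost every $(z,w)$ has a unique base-$b$ expansion in the $w$-coordinate: it introduces $A=\{(z,i/b)\in\Lambda:1\le i\le b-1\}$ and $B=\{(z,i/b)\in\Lambda:i\in\{0,b\}\}$, observes $A\cup B=T^{-1}(B)$ so that $\mu(A)=0$ by invariance, and hence $\mu\bigl(\bigcup_{n\ge0}T^{-n}A\bigr)=0$; this both legitimises the Birkhoff computation and justifies identifying $p_a(\mu)$ with the geometric quantity $\mu\bigl(\{(x,y)\in\Lambda:y\in[a/b,(a+1)/b]\}\bigr)$ appearing in \eqref{frequency}. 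You should either supply that measure-zero argument or prove directly that $\bar\Sigma$ is $\sigma$-invariant; as written, the step from the Birkhoff limit to $p_a(z,w)=p_a(\mu)$ is not fully justified.
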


\begin{proof}
Let $\mu$ be a $T$-invariant ergodic probability measure with $\mu(\Lambda)=1$ (see Definitions~1.1 and 1.4 in~\cite{Waltersbook} for the relevant definitions). First of all we show that for $\mu$-almost all $(z,w)$, $w$ has a unique base $b$-expansion. To see this we consider the sets
\begin{equation*}
    A=\left\{\left(z,\tfrac{i}{b}\right)\in\Lambda:i\in\{1,\ldots,b-1\}\right\}\quad \text{ and } \quad B=\left\{\left(z,\tfrac{i}{b}\right)\in\Lambda:i\in\{0,b\}\right\}\, .
\end{equation*}
Note that $w$ does not have a unique base $b$-expansion if and only if $T^n(z,w)\in A$ for some $n\in \N \cup \{0\}$ (note that if $(z,w) \in B$, then $w=0=(0,0,\dots)$ or $w=1=(b-1,b-1,\dots)$ and, in particular, has a unique base $b$-expansion). Furthermore, if $(z,w)\in A$, then $T(z,w)\in B$. We also have 
$$A\cup B = T^{-1}(B)$$
which means that $\mu(A\cup B)=\mu(T^{-1}(B))=\mu(B)$. Since $A$ and $B$ are disjoint this gives $\mu(A)=0$ and so 
\begin{equation*}
\mu\left(\bigcup_{n=0}^{\infty}T^{-n}(A)\right)=0\, .    
\end{equation*}
Thus, since $\mu$ is ergodic, it follows by the Birkhoff Ergodic Theorem (see Theorem 1.14 in~\cite{Waltersbook} and the remark straight after) that for $\mu$-almost all $(z,w)$
and for each $0\leq a\leq b-1$ we have that 
\begin{equation}\label{frequency}
p_a(z,w)=\mu\left(\left\{(x,y)\in\Lambda:y\in \left[\tfrac{a}{b},\tfrac{(a+1)}{b}\right]\right\}\right)=:p_a(\mu)\, .
\end{equation}
This means that for $\mu$-almost all $(z,w)$ we have
\begin{equation*}
    \gamma(z,w)_2=\frac{\sum_{a=0}^{b-1} p_a(\mu)\log \#J_2(a)}{\log b}=\gamma_2(\mu)\, ,
\end{equation*}
where $\gamma(z,w)_2$ is as seen in Theorem 2.1.
Finally note that if $p_0(\mu)=1$ then $w=0$ for \mbox{$\mu$-almost} all $(z,w)$ and if $p_{b-1}(\mu)=1$ then $w=1$ for $\mu$-almost all $(z,w)$. Thus, either the assumptions for Theorem \ref{corollary} are met or $w \in \{0,1\}$ and so we are in the case dealt with above. Thus the desired result follows.
\end{proof}

In the case where $w=(w_i)_{i\in\N}$ has $w_i\neq 0$ for infinitely many $i \in \N$ but where \mbox{$p_{0}(z,w)=1$}, it is possible that Theorem \ref{corollary} will still hold. If we have that $\#J_2(b-1)\leq\#J_2(0)$, then 
\[A_{n,j}(z,w)=(j-\lambda(n)+1)\log\#J_2(0) + o(n)\] 
and so 
$$\dimh\Lambda_{\lambda,\xi}(z,w)=\min\left\{\frac{\gamma}{1+\lambda},\frac{1}{1+\xi}\left(\gamma+(\xi-\lambda)\frac{\log \#J_2(0)}{\log b}\right)\right\}.$$ 
In the case where $\#J_2(b-1)>\#J_2(0)$, for $a\in \{0,\ldots,b-1\}$ let
$$k_n(w,a)=\inf\left\{j-n:w_j\neq a \text{ and } j>n\right\}\, .$$
If $\lim_{n\to\infty}\frac{k_n(w,0)}{n}=0$ and $(x,y)\in M_n(z,w)$ then
\begin{equation*}
    \left| p_{n,j}(0,y)-\#\left\{ \lambda(n)\leq i \leq j : w_{i}=0\right\}\right|=\mathit{o}(n)\, . 
\end{equation*}
Thus
$$A_{n,j}(z,w)=(j-\lambda(n)+1)\log\#J_2(0)+\mathit{o}({n})$$
and so we can again conclude from Theorem \ref{main}  that
$$\dimh\Lambda_{\lambda,\xi}(z,w)=\min\left\{\frac{\gamma}{1+\lambda},\frac{1}{1+\xi}\left(\gamma+(\xi-\lambda)\frac{\log \#J_2(0)}{\log b}\right)\right\}.$$ 
Similarly, if the sequence $(w_{i})_{i\in\N}$ has $w_{i}\neq b-1$ for infinitely many $i\in\N$ but $p_{b-1}(z,w)=1$, and either $\#J_{2}(b-1)\geq \#J_{2}(0)$ or $\lim_{n\to \infty} \tfrac{k_{n}(w,b-1)}{n}=0$, then
\begin{equation*}
    \dimh\Lambda_{\lambda,\xi}(z,w)=\min\left\{\frac{\gamma}{1+\lambda},\frac{1}{1+\xi}\left(\gamma+(\xi-\lambda)\frac{\log \#J_2(b-1)}{\log b}\right)\right\}.
\end{equation*}
Now consider the case where the sequence $(w_{i})_{i\in\N}$ has $w_{i}\neq 0$ for infinitely many $i\in\N$ and 
\begin{equation*}
    p_{0}(z,w)=1 \, , \quad \#J_{2}(b-1)> \#J_{2}(0)\, , \quad \text{ and } \quad  \limsup_{n\to \infty} \tfrac{k_{n}(w,0)}{n}>0\, .
\end{equation*}
Here the outcome can be different. For example, suppose that $J_2(0)$ is a proper subset of $J_2(b-1)$ and $w$ is chosen such that
$$\limsup_{n\to\infty}\frac{k_n(w,0)}{n}=\eta>\xi-\lambda.$$
In this case we can find infinitely many values $(n_k)_{k\in\N}$ where there exists $g(n_{k})<\lambda(n_k)$ with $w_{g(n_{k})}\neq 0$ and $w_i=0$ for all $g(n_{k})< i\leq \xi(n_k)$. Since we are assuming $J_2(b-1)\supset J_2(0)$ this will mean that we can find $(z,y)\in M_{n_{k}}(z,w)$ where $y_{g(n_k)}=w_{g(n_k)}-1$ and $y_i=b-1$ for all $g(n_k)< i\leq \xi(n_k)$. This means that we will have 
$$A_{n_{k},j}(z,w)=(j-\lambda(n_{k})+1)\log\#J_2(b-1)$$
for all $\lambda(n_{k})<j\leq \xi(n_{k})$ for every $k\in\N$ and hence, by Theorem~\ref{main}, we have
$$\dimh\Lambda_{\lambda,\xi}(z,w)=\min\left\{\frac{\gamma}{1+\lambda},\frac{1}{1+\xi}\left(\gamma+(\xi-\lambda)\frac{\log \#J_2(b-1)}{\log b}\right)\right\}.$$ 
Thus, if
\[\frac{\gamma}{1+\lambda}>\frac{1}{1+\xi}\left(\gamma+(\xi-\lambda)\frac{\log \#J_2(0)}{\log b}\right),\] 
then we have that
$$\dimh\Lambda_{\lambda,\xi}(z,w)>\min\left\{\frac{\gamma}{1+\lambda},\frac{1}{1+\xi}\left(\gamma+(\xi-\lambda)\frac{\log \#J_2(0)}{\log b}\right)\right\}$$ 
since $J_2(b-1) \supset J_2(0)$. Note that this is the value given by Theorem \ref{corollary}. 

\subsection{Limits: $p_{a}(z,w)$ always exists}

We now turn to the assumption that the limits $p_a(z,w)$ always exist. We will still assume that the limits
\[\lim_{n\to\infty}\frac{\lambda(n)}{n}=\lambda \qquad \text{and} \qquad \lim_{n\to\infty}\frac{\xi(n)}{n}=\xi\]
exist and $\lambda < \xi$. Recall that
$$ s_n=\min\left\{\frac{n\log\#J+A_{n,j}(z,w)}{(n+j)\log{b}}:\lambda(n)\leq j\leq\xi(n)\right\}.$$
Without the assumption that the limits $p_{a}(z,w)$ exist we do not necessarily have that
$$\limsup_{n\to\infty} s_n=\min\left\{\frac{\gamma}{1+\lambda},\frac{1}{1+\xi}\left(\gamma+(\xi-\lambda)\gamma(z,w)_{2} \right)\right\}$$
where
\begin{equation*}
    \gamma(z,w)_{2} =\dimh \left(\Lambda \cap \left\{(x,y) \in [0,1]^{2} : y=w\right\}\right)\, .
\end{equation*}
 In particular, the conclusion of Theorem \ref{corollary} does not necessarily hold in this case.

To see this let $b=3$ and $J=\{(0,0),(2,0),(0,2)\}$. We take a sequence of positive integers $(n_j)_{j\in\N}$ where 
\[n_1=1 \qquad \text{and} \qquad \lim_{j\to\infty}\frac{n_{j+1}}{n_j}=\infty.\]
We then pick \[(z,w)=\left(\sum_{i=1}^{\infty}z_ib^{-i},\sum_{i=1}^{\infty}w_ib^{-i}\right)\] where $(z_i,w_i)=(0,0)$ if $j$ is odd and $n_j\leq i<n_{j+1}$ and $(z_i,w_i)=(0,2)$ if $j$ is even and \mbox{$n_j\leq i<n_{j+1}$}. 

Note if the limits $p_{a}(z,w)$ do not exist, then we cannot write \begin{align*}
    \gamma(z,w)_{2} &=\dimh \left(\Lambda \cap \left\{(x,y) \in [0,1]^{2} : y=w\right\}\right) \\
                    &=\frac{1}{\log b} \sum_{a \in \{0, \dots , b-1\}} p_{a}(z,w) \log \#J_{2}(a)\,.
\end{align*} 
In keeping with Remark~\ref{horizontal slice}, by considering the sequence along $(n_{2j+1})_{j\in \N}$, we have that
\begin{align*}
    \dimh \left(\Lambda \cap \left\{(x,y) \in [0,1]^{2} : y=w\right\}\right)& = \liminf_{n\to \infty}\frac{\sum_{a=0}^{2}\frac{\#\{1\leq i\leq n: w_{i}=a\}}{n}\log\#J_2(a)}{\log 3} \\
    &=\liminf_{n\to \infty} \frac{\#\{1\leq i\leq n: w_{i}=0\}}{n}\frac{\log 2}{\log 3}\\
    &=0\, .
\end{align*}
 So
$$\min\left\{\frac{\gamma}{1+\lambda},\frac{1}{1+\xi}\left(\gamma+(\xi-\lambda)\gamma(z,w)_{2} \right)\right\}=\frac{\gamma}{1+\xi}.$$
However,
\begin{align*}
\dimh \Lambda_{\lambda,\xi}(z,w) &=\limsup_{n\to\infty}s_n =\lim_{j\to\infty} s_{n_{2j}} \\
&=\min\left\{\frac{\gamma}{1+\lambda},\frac{\gamma+(\xi-\lambda)\left(\frac{\log 2}{\log 3}\right)}{1+\xi}\right\} \\
&>\frac{\gamma}{1+\xi}.
\end{align*}
Moreover, it is also possible to choose $(z,w)$ where it is not possible to just consider the cases $j=\lambda(n)$ and $\xi(n)$ when computing the minimum for $s_n$. In such cases we need to use the more general statement given by Theorem \ref{main}.

\subsection{Convergence: $\left(\frac{\lambda(n)}{n}\right)$ and $\left(\frac{\xi(n)}{n}\right)$ converge}

Finally we consider the case where the limits $p_a(z,w)$ exist for all $0\leq a \leq b-1$ with \mbox{$p_0(z,w) \neq 1$} and $p_{b-1}(z,w)\neq 1$ but where the sequences $\left(\frac{\lambda(n)}{n}\right)$ and $\left(\frac{\xi(n)}{n}\right)$ may not be convergent. By adapting the arguments in the proof of Theorem \ref{corollary} we get that
$$\dimh \Lambda_{\lambda,\xi}(z,w)=\limsup_{n\to\infty}\min\left\{\frac{\gamma}{1+\left(\tfrac{\lambda(n)}{n}\right)},\frac{\gamma+\left(\tfrac{\xi(n)-\lambda(n)}{n}\right)\gamma(z,w)_2}{1+\left(\tfrac{\xi(n)}{n}\right)}\right\}.$$

\noindent{\bf Acknowledgements.} For the purpose of open access, the authors have applied a Creative Commons Attribution (CC BY) licence to any Author Accepted Manuscript version arising from this submission. The third name author is funded by a Leverhulme Trust Early Career Research fellowship ECF-2024-401.

\bibliographystyle{abbrv}
\bibliography{biblio}

\begin{thebibliography}{10}

\bibitem{AllenBarany}
D.~Allen and B.~B\'{a}r\'{a}ny.
\newblock On the {H}ausdorff measure of shrinking target sets on self-conformal
  sets.
\newblock {\em Mathematika}, 67(4):807--839, 2021.

\bibitem{Baker2019}
S.~Baker.
\newblock An analogue of {K}hintchine's theorem for self-conformal sets.
\newblock {\em Math. Proc. Cambridge Philos. Soc.}, 167(3):567--597, 2019.

\bibitem{BakerMemoirs}
S.~Baker.
\newblock Overlapping iterated function systems from the perspective of metric
  number theory.
\newblock {\em Mem. Amer. Math. Soc.}, 287(1428):v+95, 2023.

\bibitem{BakerKoivusalo}
S.~Baker and H.~Koivusalo.
\newblock Quantitative recurrence and the shrinking target problem for
  overlapping iterated function systems.
\newblock {\em Adv. Math.}, 442:Paper No. 109538, 65, 2024.

\bibitem{BaranyRams}
B.~B\'{a}r\'{a}ny and M.~Rams.
\newblock Shrinking targets on {B}edford-{M}c{M}ullen carpets.
\newblock {\em Proc. Lond. Math. Soc. (3)}, 117(5):951--995, 2018.

\bibitem{BaranyTroscheitRandomisedTranslations}
B.~B\'{a}r\'{a}ny and S.~Troscheit.
\newblock Dynamically defined subsets of generic self-affine sets.
\newblock {\em Nonlinearity}, 35(10):4986--5013, 2022.

\bibitem{BishopPeres}
C.~J. Bishop and Y.~Peres.
\newblock {\em Fractals in Probability and Analysis}, volume 162 of {\em
  Cambridge Studies in Advanced Mathematics}.
\newblock Cambridge University Press, Cambridge, 2017.

\bibitem{Daviaudpreprint}
E.~Daviaud.
\newblock Random covering by rectangles on self-similar carpets.
\newblock {\em arXiv preprint}, 2025.

\bibitem{Falconer}
K.~Falconer.
\newblock {\em Fractal Geometry: Mathematical Foundations and Applications}.
\newblock John Wiley \& Sons, Ltd., Chichester, third edition, 2014.

\bibitem{Falconer1988}
K.~J. Falconer.
\newblock The {H}ausdorff dimension of self-affine fractals.
\newblock {\em Math. Proc. Cambridge Philos. Soc.}, 103(2):339--350, 1988.

\bibitem{FraserAssouad}
J.~M. Fraser.
\newblock {\em Assouad Dimension and Fractal Geometry}, volume 222 of {\em
  Cambridge Tracts in Mathematics}.
\newblock Cambridge University Press, Cambridge, 2021.

\bibitem{HillVelani95}
R.~Hill and S.~L. Velani.
\newblock The ergodic theory of shrinking targets.
\newblock {\em Invent. Math.}, 119(1):175--198, 1995.

\bibitem{HillVelani99}
R.~Hill and S.~L. Velani.
\newblock The shrinking target problem for matrix transformations of tori.
\newblock {\em J. London Math. Soc. (2)}, 60(2):381--398, 1999.

\bibitem{Hutchinson}
J.~E. Hutchinson.
\newblock Fractals and self-similarity.
\newblock {\em Indiana Univ. Math. J.}, 30(5):713--747, 1981.

\bibitem{JordanKoivusaloPU}
T.~Jordan and H.~Koivusalo.
\newblock Hausdorff dimension of shrinking targets on {P}rzytycki-{U}rba\'nski
  fractals.
\newblock {\em arXiv preprint arXiv:2504.17498}, 2025.

\bibitem{KoivusaloLiaoRamsPathDependent}
H.~Koivusalo, L.~Liao, and M.~Rams.
\newblock Path-dependent shrinking targets in generic affine iterated function
  systems.
\newblock {\em arXiv preprint arXiv:2210.05362}, 2022.

\bibitem{KoivusaloRamirezSelfAffine}
H.~Koivusalo and F.~A. Ram\'{\i}rez.
\newblock Recurrence to shrinking targets on typical self-affine fractals.
\newblock {\em Proc. Edinb. Math. Soc. (2)}, 61(2):387--400, 2018.

\bibitem{Waltersbook}
P.~Walters.
\newblock {\em An Introduction to Ergodic Theory}, volume~79 of {\em Graduate
  Texts in Mathematics}.
\newblock Springer-Verlag, New York-Berlin, 1982.

\bibitem{WangWu}
B.~Wang and J.~Wu.
\newblock Mass transference principle from rectangles to rectangles in
  {D}iophantine approximation.
\newblock {\em Math. Ann.}, 381(1-2):243--317, 2021.

\end{thebibliography}
 
 \end{document}